\tikzstyle{punkt}=[circle, fill=black, minimum size=1mm,inner sep=0pt, draw]
\def\NZQ{\mathbb}               % the font for N,Z,Q,R,C
\def\PP{{\NZQ P}}
\def\KK{{\NZQ K}}
\def\G{{\mathcal G}}
\def\opn#1#2{\def#1{\operatorname{#2}}} % to make operators
\opn\chara{char} \opn\length{\ell} \opn\pd{pd} \opn\rk{rk}
\opn\projdim{proj\,dim} \opn\injdim{inj\,dim} \opn\rank{rank}
\opn\depth{depth} \opn\grade{grade} \opn\height{height}
\opn\embdim{emb\,dim} \opn\codim{codim}
\opn\Tr{Tr} \opn\bigrank{big\,rank}
\opn\superheight{superheight}\opn\lcm{lcm}
\opn\trdeg{tr\,deg}%\emph{
\opn\reg{reg} \opn\lreg{lreg} \opn\ini{in} \opn\lpd{lpd}
\opn\size{size} \opn\sdepth{sdepth}
\opn\link{link}\opn\fdepth{fdepth}\opn\lex{lex}
\opn\tr{tr}
\opn\type{type}
\opn\gap{gap}
\opn\arithdeg{arith-deg}
\opn\astab{astab}
\opn\dstab{dstab}
\opn\bigheight{bigheight}
\opn\div{div} \opn\Div{Div} \opn\cl{cl} \opn\Cl{Cl}
\opn\Spec{Spec} \opn\Supp{Supp} \opn\supp{supp} \opn\Sing{Sing}
\opn\Ass{Ass} \opn\Min{Min}\opn\Mon{Mon} \opn{\ara}{ara} \opn{\rad}{rad}
\opn\Ann{Ann} \opn\Rad{Rad} \opn\Soc{Soc}
\opn\Im{Im} \opn\Ker{Ker} \opn\Coker{Coker} \opn\Am{Am}
\opn\Hom{Hom} \opn\Tor{Tor} \opn\Ext{Ext} \opn\End{End}
\opn\Aut{Aut} \opn\id{id}
\opn\nat{nat}
\opn\pff{pf}%   \pf exists already
\opn\Pf{Pf} \opn\GL{GL} \opn\SL{SL} \opn\mod{mod} \opn\ord{ord}
\opn\Gin{Gin} \opn\Hilb{Hilb}\opn\sort{sort}
\opn\PF{PF}\opn\Ap{Ap}
\opn\mult{mult}
\opn\aff{aff}
\opn\relint{relint} \opn\st{st}
\opn\lk{lk} \opn\cn{cn} \opn\core{core} \opn\vol{vol}  \opn\inp{inp} \opn\nilpot{nilpot}
\opn\link{link} \opn\star{star}\opn\lex{lex}\opn\set{set}
\opn\width{wd}
\opn\Fr{F}
\opn\QF{QF}
\opn\G{G}
\opn\type{type}\opn\res{res}
\opn\conv{conv}
\opn\gr{gr}
\def\pot#1#2{#1[\kern-0.28ex[#2]\kern-0.28ex]}
\opn\dirlim{\underrightarrow{\lim}}
\opn\inivlim{\underleftarrow{\lim}}
\def\Implies{\ifmmode\Longrightarrow \else
	\unskip${}\Longrightarrow{}$\ignorespaces\fi}
\def\implies{\ifmmode\Rightarrow \else
	\unskip${}\Rightarrow{}$\ignorespaces\fi}
\def\iff{\ifmmode\Longleftrightarrow \else
	\unskip${}\Longleftrightarrow{}$\ignorespaces\fi}
\newtheorem{Theorem}{Theorem}[section]
\newtheorem{Corollary}[Theorem]{Corollary}
\newtheorem{Proposition}[Theorem]{Proposition}
\let\epsilon\varepsilon
\let\kappa=\varkappa
\def\qed{\ifhmode\textqed\fi
	\ifmmode\ifinner\quad\qedsymbol\else\dispqed\fi\fi}
\def\textqed{\unskip\nobreak\penalty50
	\hskip2em\hbox{}\nobreak\hfil\qedsymbol
	\parfillskip=0pt \finalhyphendemerits=0}
\def\dispqed{\rlap{\qquad\qedsymbol}}
\opn\dis{dis}
\def\pnt{{\raise0.5mm\hbox{\large\bf.}}}
\opn\Lex{Lex}
\begin{document}
	
	\title {Hankel edge ideals of trees and (semi-)Hamiltonian graphs}

	\author {Dariush Kiani, Sara Saeedi Madani, Saeed Tafazolian}

	\address{Dariush Kiani, Department of Mathematics and Computer Science, Amirkabir University of Technology (Tehran Polytechnic), Iran}
	\email{dkiani@aut.ac.ir}

	\address{Sara Saeedi Madani, Department of Mathematics and Computer Science, Amirkabir University of Technology (Tehran Polytechnic), Iran, and School of Mathematics, Institute for Research in Fundamental Sciences (IPM), Tehran, Iran}
	\email{sarasaeedi@aut.ac.ir}

	\address{Saeedi Tafazolian, Institute of Mathematics, Statistics and Computer Science, University of Campinas, 
	Rua Sergio Buarque de Holanda, 651, Cidade Universitria. SP, Brazil}
	\email{tafazolian@ime.unicamp.br}

	\dedicatory{ }
	
	\begin{abstract}
		In this paper, we study the Hankel edge ideals of graphs. We determine the minimal prime ideals of the Hankel edge ideal of labeled Hamiltonian and semi-Hamiltonian graphs, and we investigate radicality, being a complete intersection, almost complete intersection and set theoretic complete intersection for such graphs. We also consider the Hankel edge ideal of trees with a natural labeling, called rooted labeling. We characterize such trees whose Hankel edge ideal is a complete intersection, and moreover, we determine those whose initial ideal with respect to the reverse lexicographic order satisfies this property.      
	\end{abstract}
	
	\thanks{2020~{\em{Mathematics Subject Classification}}. 05E40.}

	% \subjclass[2010]{05E40}
	
	%		13D02   	Syzygies, resolutions, complexes
	%		05E40   	Combinatorial aspects of commutative algebra
	
	%		13F20   	Polynomial rings and ideals; 
	
	\keywords{Hankel edge ideals, (semi-)Hamiltonian graphs, rooted labeling, (almost/set-theoretic) complete intersection}
	
	\maketitle
	
	\setcounter{tocdepth}{1}
	%\tableofcontents
	
	\section{Introduction}\label{introduction}
	
 Let $\KK$ be a field and let $G$ be a finite simple graph (i.e. with no loops and multiple edges) with the vertex set $V(G)=[n]$ (i.e. $\{1,\ldots,n\}$) and the edge set $E(G)$. Then the binomial edge ideal $J_G$ of $G$ in the polynomial ring $R=\KK[x_1,\ldots,x_n,y_1,\ldots,y_n]$ is generated by the binomials $f_{ij}=x_iy_j-x_jy_i$ with $i<j$ such that $\{i,j\}\in E(G)$. This type of ideals were introduced at about the same time by Herzog et al. in \cite{HHHKR} and Ohtani in \cite{O} in~2010. The binomial edge ideal of $G$ could be seen as the ideal generated by a collection of $2$-minors of the matrix   
  \[
 \begin{bmatrix} x_1 & x_2 & \cdots & x_n \\ y_1 & y_2 & \cdots & y_n \end{bmatrix}.
 \]
 In the last decade, several algebraic and homological properties and invariants of binomial edge ideals have been intensively studied, mainly in terms of the combinatorial properties of the underlying graph, by many authors, see for example \cite{CR, EHH, EZ, HHO, KS, KS1, KS2, R, RSK1, S, SK}. 
 
 In 2015, another class of binomial ideals associated to graphs was introduced in \cite{CDE}. Let $S=\KK[x_1,\ldots,x_{n+1}]$ be the polynomial ring over $\KK$ and with the indeterminates $x_1,\ldots,x_{n+1}$. The \emph{Hankel edge ideal} of $G$, denoted by $I_G$, is the ideal of $S$ generated by the binomials $g_{ij}=x_ix_{j+1}-x_jx_{i+1}$ where $i<j$ and $\{i,j\} \in E(G)$. This ideal is also seen as the ideal generated by a collection of 2-minors of the $2\times n$ Hankel matrix
 \[
 X=\begin{bmatrix} x_1 & x_2 & \cdots & x_n \\ x_2 & x_3 & \cdots & x_{n+1} \end{bmatrix}.
 \]
 In the special case that $G$ is the complete graph with $n$ vertices, the Hankel edge ideal coincides exactly with the well known ideal $I_X$ of the rational normal curve $\mathcal{X} \subset \PP^n$. For some properties of the ideal $I_X$, see for example~\cite{BV, C, E}. 
 
 Note that in \cite{CDE} and \cite{DHI}, the ideal $I_G$ was called the \emph{binomial edge ideal of}~$X$ and also the \emph{scroll binomial edge ideal} of $G$. But, in this paper, we chose to call $I_G$ the Hankel edge ideal of $G$. 
 
 In \cite{CDE}, the authors determined all graphs $G$ for which the generators $g_{ij}$ form a Gr\"obner basis for $I_G$  with respect to the reverse lexicographic order $<$ induced by $x_1>\cdots >x_n>x_{n+1}$. Indeed, the only graphs with this property are the so-called \emph{closed} graphs (also known as proper interval graphs). A graph $G$ is said to be closed if one could label its vertices so that the maximal cliques (i.e. complete subgraphs) of $G$ are labeled as  intervals. Throughout this paper, when we talk about a closed graph, we mean, as usual, a closed graph with this specific labeling. 
 In \cite{CDE}, it was also shown that for any closed graph $G$, the Hankel edge ideal is Cohen-Macaulay of dimension~$c+1$ where $c$ is the number of connected components of $G$. In the same paper, the minimal prime ideals of $I_G$ were determined for any connected closed graph $G$. Consequently being radical as well as being a set theoretic complete intersection could be investigated for the same class of graphs. In \cite{CDE}, the authors also studied the Castelnuovo-Mumford regularity of $I_G$. They, indeed, gave a combinatorial upper bound for $\reg (S/I_G)$ where $G$ is a closed graph. Later, in \cite{DHI}, the authors characterized all closed graphs for which the aforementioned upper bound is attained. In the same paper, the graded Betti numbers of $I_G$ and $\ini_<(I_G)$ were also considered. In \cite{DHI}, a combinatorial characterization for Gorenstein Hankel edge ideals was also given. A generalization of Hankel edge ideals was also introduced and studied in \cite{CQ}. 
 
 As mentioned above, the class of closed graphs has played an essential role in the study of Hankel edge ideals of graphs so far. This is in fact because closed graphs admit a nice distinguished vertex labeling, and in general Hankel edge ideals do depend on the labeling of the underlying graphs. As an example for this fact, see \cite[page~972]{CDE}. In this paper, we study the Hankel edge ideals of some classes of graphs with certain natural labelings, like labeled Hamiltonian and semi-Hamiltonian graphs as well as rooted labeled trees, which are all defined precisely in the sequel. 
 
 The organization of this paper is as follows. In Section~\ref{hamiltonian}, we focus on the Hankel edge ideal of labeled Hamiltonian and semi-Hamiltonian graphs. By a labeled Hamiltonian graph, we mean a graph which has a Hamiltonian cycle (i.e. a cycle containing all the vertices of the graph) with a certain natural labeling of its vertices. By a labeled semi-Hamiltonian graph, we mean a non-Hamiltonian graph which admits a labeled Hamiltonian path (i.e. a path containing all the vertices of the graph) with a certain natural labeling of its vertices. Closed graphs are semi-Hamiltonian graphs, so that in this section, we recover most of the results in \cite{CDE}. However, this section deals with a much wider class of graphs than closed graphs. In this section, we determine the minimal prime ideals of the Hankel edge ideal of connected labeled (semi-)Hamiltonian graphs, and in particular, we show that the height of $I_G$ is $n-1$ where $|V(G)|=n$. Moreover, we characterize all Hankel edge ideals of connected labeled (semi-)Hamiltonian graphs, with respect to radicality, being complete intersection and almost complete intersection. In this section, we also show that the arithmetical rank of  $I_G$ is $n-1$, and $I_G$ is a set theoretic complete intersection for connected labeled (semi-)Hamiltonian graph $G$ with $|V(G)|=n$.  
 
  In the view of an observation in Section~\ref{hamiltonian} that if the Hankel edge ideal of a connected graph is a complete intersection, then the graph must be a tree, in Section~\ref{c.i.} trees become our main objects of interest. So, we need to fix a nice vertex labeling for trees which is provided in this section called \emph{rooted labeling}. Next, we show that under this labeling, the only trees which could have complete intersection Hankel edge ideal are paths. However, not all rooted labeled paths have this property. Indeed, we show that the Hankel edge ideal of a rooted labeled tree $T$ is a complete intersection if and only if $T$ is a path in which a leaf or a neighbor of a leaf is the root of $T$. Furthermore, we consider the initial ideal of such trees over $n$ vertices with respect to the reverse lexicographic order induces by $x_1>\cdots >x_n>x_{n+1}$. Indeed we show that there is only one path on $n$ vertices with a rooted labeling whose Hankel edge ideal has a complete intersection initial ideal. Throughout the paper, we also pose some questions accordingly.

   \section{Hankel edge ideal of Hamiltonian and semi-Hamiltonian graphs}\label{hamiltonian}
   
   In this section we determine the minimal prime ideals of the Hankel edge ideal of (semi-)Hamiltonian graphs with a certain labeling. As some consequences, we also study some of the algebraic properties of such ideals, like radicality, being a complete intersection, almost complete intersection and set theoretic complete intersection. 
   
   Recall that a \emph{Hamiltonian cycle} in a graph $G$ is a cycle which contains all the vertices of $G$. The graph $G$ which has a Hamiltonian cycle is called a \emph{Hamiltonian graph}. A \emph{Hamiltonian path} in $G$ is a path which contains all the vertices of $G$. The graph $G$ which has a Hamiltonian path is called \emph{traceabel graph}. A traceable graph which is not Hamiltonian is called \emph{semi-Hamiltonian graph}. We denote by $L_n$ the path with the vertex set $[n]$ and with the edges $\{i,i+1\}$ for all $i=1,\ldots , n-1$. We also denote by $C_n$ the cycle with the vertex set $[n]$ and with the edges $\{1,n\}$ and $\{i,i+1\}$ for all $i=1,\ldots , n-1$. 
   
   Now, let $G$ be a labeled graph with the vertex set $[n]$. If $C_n$ is a subgraph of $G$, then we say that $G$ is a \emph{labeled Hamiltonian graph}. If $L_n$ is a subgraph of $G$ and $\{1,n\}\notin E(G)$, then we say that $G$ is a \emph{labeled semi-Hamiltonian graph}. 
   
   As some well known classes of graphs which are labeled Hamiltonian, one could mention the cycles $C_n$, the complete graphs $K_n$ and the complete bipartite graphs $K_{t,t}$ with $n=2t$ where the even labels and odd labels provide the bipartition of the graph. The graph depicted in Figure~\ref{Hamiltonian} is also an example of a labeled Hamiltonian graph on the vertex set $\{1,2,3,4,5\}$ which is not in the aforementioned classes. 
   
   \usetikzlibrary{arrows}
   \begin{figure}[h!]
   	\begin{tikzpicture}[scale = 1]
   \draw [line width=1.6pt] (-5,4)-- (-3,4);
   \draw [line width=1.6pt] (-3,4)-- (-3,2);
   \draw [line width=2pt] (-3,2)-- (-5,2);
   \draw [line width=1.6pt] (-5,4)-- (-5,2);
   \draw [line width=1.6pt] (-3,4)-- (-1,3);
   \draw [line width=1.6pt] (-1,3)-- (-3,2);
   \draw (-5.52,4.42) node[anchor=north west] {$1$};
   \draw (-1,3.51) node[anchor=north west] {$3$};
   \draw (-3.08,4.59) node[anchor=north west] {$2$};
   \draw (-3,2.05) node[anchor=north west] {$4$};
   \draw (-5.5,2.1) node[anchor=north west] {$5$};
   \begin{scriptsize}
   	\draw [fill=black] (-5,4) circle (2pt);
   	\draw [fill=black] (-3,4) circle (2pt);
   	\draw [fill=black] (-3,2) circle (2pt);
   	\draw [fill=black] (-5,2) circle (2pt);
   	\draw [fill=black] (-1,3) circle (2pt);
   \end{scriptsize}
\end{tikzpicture}
\caption{A labeled Hamiltonian graph}
\label{Hamiltonian}
\end{figure}
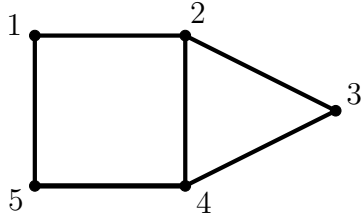
   
   As a well known class of graphs which are labeled Hamiltonian, one can mention non-complete closed graphs. Several properties of the Hankel edge ideal of this class of graphs were studied in \cite{CDE}. Recall that a graph $G$ is said to be closed if one could label its vertices so that the maximal cliques (i.e. complete subgraphs) of $G$ are labeled as  intervals. The graph shown in Figure~\ref{semiHamiltonian} is an example of a labeled semi-Hamiltonian graph with the vertex set $\{1,2,3,4,5,6\}$ which is not a closed graph. 
   
   \usetikzlibrary{arrows}
   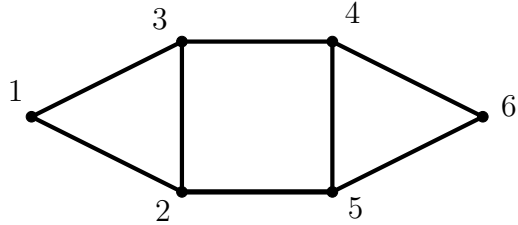
\begin{figure}[h!]
   	\begin{tikzpicture}[scale = 1]
   	\draw [line width=1.6pt] (-5,4)-- (-3,4);
   	\draw [line width=1.6pt] (-3,4)-- (-3,2);
   	\draw [line width=2pt] (-3,2)-- (-5,2);
   	\draw [line width=1.6pt] (-5,4)-- (-5,2);
   	\draw [line width=1.6pt] (-3,4)-- (-1,3);
   	\draw [line width=1.6pt] (-1,3)-- (-3,2);
   	\draw (-7.46,3.63) node[anchor=north west] {$1$};
   	\draw (-5.54,4.59) node[anchor=north west] {$3$};
   	\draw (-5.5,2.03) node[anchor=north west] {$2$};
   	\draw (-2.98,4.66) node[anchor=north west] {$4$};
   	\draw (-2.94,2.07) node[anchor=north west] {$5$};
   	\draw [line width=1.6pt] (-5,4)-- (-7,3);
   	\draw [line width=1.6pt] (-7,3)-- (-5,2);
   	\draw (-0.9,3.43) node[anchor=north west] {$6$};
   	\begin{scriptsize}
   	\draw [fill=black] (-5,4) circle (2pt);
   	\draw [fill=black] (-3,4) circle (2pt);
   	\draw [fill=black] (-3,2) circle (2pt);
   	\draw [fill=black] (-5,2) circle (2pt);
   	\draw [fill=black] (-1,3) circle (2pt);
   	\draw [fill=black] (-7,3) circle (2pt);
   	\end{scriptsize}
   	\end{tikzpicture}
   	\caption{A labeled semi-Hamiltonian graph}
   	\label{semiHamiltonian}
   \end{figure}

   \medskip
   In \cite{CDE}, it was shown that for any connected graph $G$ on $n$ vertices, $I_X$ is a minimal prime ideal of $I_G$. Therefore, since $\height I_X=n-1$ (see for example~\cite[Corollary~1.3]{CDE}), we have the following:
   
   \begin{Proposition}\label{height}
   	{\em (See} \cite[Proposition~2.1]{CDE}{\em )}
   	Let $G$ be a connected graph on $n$ vertices. Then the following hold:
   	\begin{itemize}
   		\item [{(a)}] $I_X$ is a minimal prime ideal of $I_G$.
   		\item [{(b)}] $\height I_G\leq n-1$.  
   		\item [{(c)}] For any minimal prime $P$ of $I_G$ which contains no variable, one has $P=I_X$. 
   	\end{itemize}
   \end{Proposition}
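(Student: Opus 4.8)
The plan is to prove (a) and (c) together by localizing at the variables, and then to read off (b) as a formal consequence. I will use two standing facts. First, $I_G\subseteq I_X$: each generator $g_{ij}=x_ix_{j+1}-x_jx_{i+1}$ is exactly the $2$-minor of the Hankel matrix $X$ on columns $i$ and $j$, hence lies in $I_X$. Second, $I_X$ is a prime ideal of height $n-1$ (the defining ideal of the irreducible rational normal curve, cf. \cite{BV,C,E}) and it contains no variable, since the curve meets the torus; for instance the point with all coordinates equal to $1$ lies on it.

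First I would localize at the multiplicative set $U$ consisting of the powers of $x_1\cdots x_{n+1}$ and establish the identity $(I_G)_U=(I_X)_U$. The inclusion $\subseteq$ is immediate from $I_G\subseteq I_X$. For the reverse inclusion, after inverting the variables I write $g_{ij}=x_ix_j\,(r_j-r_i)$, where $r_k:=x_{k+1}/x_k$, so that the relation attached to an edge $\{i,j\}\in E(G)$ reads $r_i\equiv r_j$ modulo $(I_G)_U$. Since $G$ is connected, any two vertices $i,j\in[n]$ are joined by a path $i=v_0,v_1,\dots,v_m=j$, and telescoping $r_j-r_i=\sum_{t}(r_{v_{t+1}}-r_{v_t})$ gives $r_i\equiv r_j$ modulo $(I_G)_U$ for all $i,j\in[n]$. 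Hence every complete-graph minor $g_{ij}$ already lies in $(I_G)_U$, so $(I_X)_U\subseteq(I_G)_U$, and equality holds.

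From this identity, (a) and (c) follow formally. A prime of $S$ is disjoint from $U$ exactly when it contains no variable, and localization gives an inclusion-preserving bijection between such primes and the primes of $S_U$; thus the minimal primes of $I_G$ containing no variable correspond to the minimal primes of $(I_G)_U$. Since $(I_X)_U=(I_G)_U$ is a proper prime ideal, it is its own unique minimal prime, so $I_G$ has exactly one minimal prime containing no variable, namely $I_X$; this is (c). For (a), any prime containing $I_G$ contains a minimal prime $P$ of $I_G$, so $I_X$ contains such a $P$; as $P\subseteq I_X$ contains no variable, (c) forces $P=I_X$, whence $I_X$ is itself a minimal prime of $I_G$. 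Then (b) is immediate: since $I_X$ is a minimal prime of $I_G$ and the height of an ideal is the minimum of the heights of its minimal primes, $\height I_G\le\height I_X=n-1$.

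The only genuinely non-formal step, and hence the main obstacle, is the localization identity $(I_G)_U=(I_X)_U$: one must show that after inverting the variables the edge relations of a \emph{connected} graph already generate all Hankel minors. This is precisely where connectivity enters, through the telescoping along a path; the remainder is bookkeeping with the correspondence of primes under localization, together with the recorded primality and height $n-1$ of $I_X$.
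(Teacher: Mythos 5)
Your proof is correct. The paper itself gives no proof of this statement---it is quoted from \cite[Proposition~2.1]{CDE}---and your localization argument (inverting the variables, writing $g_{ij}=x_ix_j(r_j-r_i)$ with $r_k=x_{k+1}/x_k$, telescoping along paths in the connected graph $G$ to obtain $(I_G)_U=(I_X)_U$, and then contracting primes disjoint from $U$) is exactly the standard argument behind the cited result; the supporting facts you invoke, namely $I_G\subseteq I_X$, the primality and height $n-1$ of $I_X$, and $x_k\notin I_X$ via the all-ones point of the rational normal curve, are all correct, and the passage from the localization identity to parts (a), (b) and (c) is sound.
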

   
   Now, we determine the minimal prime ideals of $I_G$ whenever $G$ is a (semi-)Hamiltonian graph and we particularly deduce that for these graphs the height attains the exact value~$n-1$.  
   
   \begin{Theorem}\label{minimal primes}
 Let $G$ be a connected graph with $V(G)=[n]$. Then the following hold:
 \begin{itemize}
 	\item [{(a)}] If $G$ is a labeled Hamiltonian graph, then 
 	$\Min (I_G)=\{I_X\}$. 
 	\item [{(b)}] If $G$ is a labeled semi-Hamiltonian graph, then 
 	$\Min (I_G)=\{I_X,(x_2,\ldots, x_n)\}$. 
\end{itemize}	
In particular, in both cases, $\height I_G=n-1$. 
   \end{Theorem}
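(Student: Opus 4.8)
The plan is to push everything through the spanning path $L_n\subseteq G$ and to exploit Proposition~\ref{height}. Since $L_n$ is a subgraph of $G$ in both cases, $I_G$ contains the quadrics $g_{j-1,j}=x_{j-1}x_{j+1}-x_j^2$ for every interior index $2\le j\le n$, and I would let these drive the whole argument. The key step is a propagation lemma: for any prime $P$ with $I_G\subseteq P$, if $P$ contains a single variable $x_k$, then $P$ contains all of $x_2,\dots,x_n$. Granting this, Proposition~\ref{height}(c) produces a clean dichotomy for a minimal prime $P\in\Min(I_G)$ — either $P$ contains no variable, in which case $P=I_X$, or $P$ contains a variable and hence $(x_2,\dots,x_n)\subseteq P$ — and the two cases of the theorem are then separated according to whether the Hamiltonian edge $\{1,n\}$ is present.

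The heart of the proof, and the step I expect to require the most care, is the propagation lemma. For each $2\le j\le n$ the relation $g_{j-1,j}$ gives $x_j^2\equiv x_{j-1}x_{j+1}\pmod P$; since $P$ is prime, $x_{j-1}\in P$ forces $x_j^2\in P$ and so $x_j\in P$, and symmetrically $x_{j+1}\in P$ forces $x_j\in P$. Starting from the given $x_k$ and propagating both upward and downward along these implications reaches every $x_j$ with $2\le j\le n$. The only delicate point is the endpoint bookkeeping for $k=1$ and $k=n+1$: the relation $g_{j-1,j}$ is available precisely for $2\le j\le n$, so the propagation stops exactly at $x_2$ and $x_n$ and never forces $x_1$ or $x_{n+1}$ into $P$ — which is what makes $(x_2,\dots,x_n)$, rather than something larger, the relevant ideal.

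For part (a), the Hamiltonian cycle supplies the extra generator $g_{1,n}=x_1x_{n+1}-x_2x_n\in I_G$, and I would argue by contradiction. If a minimal prime $P$ contained a variable, then $(x_2,\dots,x_n)\subseteq P$ by the propagation lemma, whence $g_{1,n}$ together with $x_2x_n\in P$ would give $x_1x_{n+1}\in P$ and so $x_1\in P$ or $x_{n+1}\in P$. Either way $P$ would contain $n$ of the variables and therefore contain $I_X$, since every $g_{ij}$ lies in both $(x_1,\dots,x_n)$ and $(x_2,\dots,x_{n+1})$ as $1\le i<j\le n$; minimality would then force $P=I_X$, contradicting that $P$ contains a variable. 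Hence no minimal prime contains a variable, and $\Min(I_G)=\{I_X\}$.

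For part (b), I would first use $\{1,n\}\notin E(G)$ to check directly that $I_G\subseteq(x_2,\dots,x_n)$: reducing $g_{ij}=x_ix_{j+1}-x_jx_{i+1}$ modulo $(x_2,\dots,x_n)$ annihilates both monomials unless $i=1,\ j=n$, and that edge is excluded. Thus $(x_2,\dots,x_n)$ is a prime containing $I_G$, and the dichotomy above gives $\Min(I_G)\subseteq\{I_X,(x_2,\dots,x_n)\}$; both are genuinely minimal, the second because $I_X\not\subseteq(x_2,\dots,x_n)$ (indeed $g_{1,n}=x_1x_{n+1}-x_2x_n\in I_X$ but $g_{1,n}\notin(x_2,\dots,x_n)$), so neither contains the other. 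Hence $\Min(I_G)=\{I_X,(x_2,\dots,x_n)\}$. The final height claim is then immediate in both cases: $\height I_X=n-1$ by Proposition~\ref{height}, while the monomial prime $(x_2,\dots,x_n)$ is generated by $n-1$ variables and so also has height $n-1$, and taking the minimum over the minimal primes yields $\height I_G=n-1$.
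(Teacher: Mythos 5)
Your proposal is correct and follows essentially the same route as the paper: both arguments use the path quadrics $x_{j-1}x_{j+1}-x_j^2$ to propagate variable membership in a prime containing $I_G$, invoke Proposition~\ref{height}(c) for primes without variables, and use the presence or absence of the edge $\{1,n\}$ (via $g_{1n}=x_1x_{n+1}-x_2x_n$) to distinguish the two cases. The only difference is organizational — you package the propagation as a single two-directional lemma and treat $x_1\in P$ and $x_{n+1}\in P$ symmetrically in part (a), whereas the paper cases on the smallest index $i$ with $x_i\in P$ — but the underlying ideas are identical.
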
   

\begin{proof}
Let $G$ be either a labeled Hamiltonian or a labeled semi-Hamiltonian graph, and let $P$ be a minimal prime ideal of $I_G$. First we observe that if $x_j\in P$, then $x_{j+1}\in P$, for any $j=1,\ldots,n-1$. Indeed, since $L_n$ is a subgraph of $G$, it follows that 
$g_{j j+1}=x_jx_{j+2}-x_{j+1}^2\in I_G$. Also, since $P$ is a prime ideal containing $I_G$, it follows from $x_j\in P$ that $x_{j+1}\in P$.	

(a)	By Theorem~\ref{height}, it is enough to show that $I_G$ does not have any minimal prime ideal containing variables. Suppose on contrary that $P$ is a minimal prime ideal of $I_G$ which contains a variable. Now let $i$ be the smallest integer with $x_i\in P$. First assume that $i=1$. The above observation implies that $x_j\in P$ for all $j=1,\ldots, n$. This implies that $I_X\subseteq (x_1,x_2,\ldots,x_n)\subseteq P$. Since $P$ and $I_X$ are both minimal prime ideals of $I_G$, it follows that $P=I_X$, which is a contradiction. Next assume that $i=2$. Then, again by our above observation, we have that $x_j\in P$ for all $j=2,\ldots,n$. On the other hand, since $C_n$ is a subgraph of $G$, we have $g_{1n}=x_1x_{n+1}-x_2x_n\in I_G$. Since $x_1\notin P$ and $P$ is a prime ideal containing $I_G$, we deduce that $x_{n+1}\in P$. Thus, we have $I_X\subseteq (x_2,x_3,\ldots,x_n,x_{n+1})\subseteq P$. Therefore, $P=I_X$ by minimality of $P$, which is a contradiction. Finally, assume that $i\geq 3$. Then $g_{i-2~i-1}=x_{i-2}x_i-x_{i-1}^2\in I_G\subseteq P$. Thus, $x_{i-1}\in P$ which is a contradiction, since $i$ is the smallest index such that $x_i\in P$. Therefore, $I_X$ is the only minimal prime ideal of $I_G$. 

(b) By Theorem~\ref{height}, it suffices to prove that if $P$ is a minimal prime ideal of $I_G$ containing a variable, then $P=(x_2,\ldots ,x_n)$. Let $i$ be the smallest integer with $x_i\in P$.  
If $i=1$, then by using our above observation it follows that $I_X\subseteq (x_1,\ldots,x_n)\subseteq P$, and hence $P=I_X$, a contradiction. If $i=2$, then again by our observation we have $(x_2,\ldots,x_n)\subseteq P$. Since $I_G\subseteq (x_2,\ldots,x_n)$ and $P$ is a minimal prime ideal of $I_G$, it follows that $P=(x_2,\ldots,x_n)$. If $i\geq 3$, then using the fact that $g_{i-2~i-1}\in I_G$, similar to the proof of part~(a), we deduce that $x_{i-1}\in P$, contradicting the assumption that $i$ is the smallest desired integer. Therefore, $\Min (I_G)$ consists only of $I_X$ and $(x_2,\ldots,x_n)$. 
\end{proof}       

For a graph $G$ and $e\in E(G)$, we denote by $G-e$ the subgraph of $G$ with the same vertex set as $G$ obtained by removing the edge $e$ from $G$. 

For a homogeneous ideal $I$ in $S$, we denote by $\mu(I)$, the cardinality of a minimal homogeneous generating set of $I$. 
We would like to remark that it is easily seen that neither of the binomials $g_{ij}=x_ix_{j+1}-x_{i+1}x_j$ for $1\leq i < j \leq n$ could be generated in $S$ by the other ones. In particular, the generators $g_{ij}$'s of $I_G$ provide a minimal generating set for this ideal. Therefore, we have $\mu(I_G)=|E(G)|$.    

\medskip
Using Theorem~\ref{minimal primes}, we obtain the following characterization of radicality of $I_G$ in the case of labeled (semi-)Hamiltonian graphs.

\begin{Corollary}\label{radical}
 Let $G$ be a connected graph with $V(G)=[n]$. Then the following hold:
 \begin{itemize}
 	\item [{(a)}] Let $G$ be a labeled Hamiltonian graph. Then $I_G$ is a radical ideal if and only if $G=K_n$. 
 	
 	\item [{(b)}] Let $G$ be a labeled semi-Hamiltonian graph. Then $I_G$ is a radical ideal if and only if $G=K_n-e$ where $e=\{1,n\}$. 
 \end{itemize}	
\end{Corollary}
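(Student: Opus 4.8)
The plan is to read $\sqrt{I_G}$ off Theorem~\ref{minimal primes} and combine it with the remark $\mu(I_G)=|E(G)|$ recorded just above: since no generator $g_{ij}$ lies in the ideal generated by the remaining ones, one has $g_{ij}\in I_H$ iff $\{i,j\}\in E(H)$, so an equality of Hankel edge ideals is the same as an equality of edge sets. This converts each radicality assertion into an explicit ideal equality. For (a), Theorem~\ref{minimal primes}(a) gives $\sqrt{I_G}=I_X$, and $I_G\subseteq I_X$ because $I_X$ is a minimal prime; hence $I_G$ is radical iff $I_G=I_X=I_{K_n}$. If $G=K_n$ then $I_G=I_X$ is prime and therefore radical, while conversely $I_G=I_{K_n}$ forces $E(G)=E(K_n)$, i.e. $G=K_n$.

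For (b) we have $\sqrt{I_G}=I_X\cap(x_2,\dots,x_n)$. I would first record the inclusion $I_{K_n-e}\subseteq I_X\cap(x_2,\dots,x_n)$: every $g_{ij}$ with $\{i,j\}\neq\{1,n\}$ lies in $I_{K_n}=I_X$, and a direct inspection shows that both of its monomials involve some $x_k$ with $2\le k\le n$, the only escape $x_1x_{n+1}$ occurring precisely for the omitted edge $\{1,n\}$. The forward implication is then immediate: if $I_G$ is radical then $I_G=I_X\cap(x_2,\dots,x_n)\supseteq I_{K_n-e}$, whereas semi-Hamiltonicity gives $\{1,n\}\notin E(G)$, so $E(G)\subseteq E(K_n-e)$ and $I_G\subseteq I_{K_n-e}$; equality of these ideals together with the edge-count principle yields $G=K_n-e$.

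The substantive step is the converse, that $G=K_n-e$ makes $I_G$ radical, equivalently $I_X\cap(x_2,\dots,x_n)\subseteq I_{K_n-e}$. Given a homogeneous $f$ in this intersection, I would use $I_X=I_{K_n}=I_{K_n-e}+(g_{1n})$ to write $f=p+q\,g_{1n}$ with $p\in I_{K_n-e}$; reducing modulo $(x_2,\dots,x_n)$ and using $g_{1n}\equiv x_1x_{n+1}$ in the domain $\KK[x_1,x_{n+1}]$ forces $q\in(x_2,\dots,x_n)$, so it remains to show $x_k\,g_{1n}\in I_{K_n-e}$ for every $2\le k\le n$. This is the crux, and it comes from the $2\times 3$ Hankel submatrix on columns $1,k,n$: since the vector of signed $2\times 2$ minors of a $2\times 3$ matrix lies in its kernel, one obtains the Pl\"ucker-type identities $x_k\,g_{1n}=x_1\,g_{kn}+x_n\,g_{1k}$ and $x_{k+1}\,g_{1n}=x_2\,g_{kn}+x_{n+1}\,g_{1k}$ for $2\le k\le n-1$, each a one-line binomial verification. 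As $\{1,k\}$ and $\{k,n\}$ differ from $\{1,n\}$ in this range, the right-hand sides lie in $I_{K_n-e}$, and the two families jointly cover all $x_k\,g_{1n}$ with $2\le k\le n$ (the boundary values $k=2$ and $k=n$ being supplied by the first and the second identity). Hence $q\,g_{1n}\in I_{K_n-e}$ and $f\in I_{K_n-e}$, so $I_{K_n-e}=I_X\cap(x_2,\dots,x_n)=\sqrt{I_{K_n-e}}$ is radical. I expect the only real difficulty to be this last step---discovering and checking that $(x_2,\dots,x_n)\,g_{1n}\subseteq I_{K_n-e}$, and confirming at the extreme indices that each invoked $g_{1k},g_{kn}$ is a genuine edge of $K_n-e$---with the $2\times 3$ minor relations making the computation transparent.
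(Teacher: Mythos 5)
Your proof is correct, and in part (a) and in the overall skeleton of part (b) it follows the paper exactly: read $\rad(I_G)$ off Theorem~\ref{minimal primes}, then use the fact that distinct edge sets give distinct Hankel edge ideals to pin down $G$. The one genuine difference is in the substantive half of (b). The paper simply cites \cite[Proposition~2.3]{CDE} for the identity $I_{K_n-e}=I_X\cap(x_2,\ldots,x_n)$ (equivalently, for the radicality of $I_{K_n-e}$), whereas you prove it from scratch: the inclusion $I_{K_n-e}\subseteq I_X\cap(x_2,\ldots,x_n)$ by inspecting the monomials of $g_{ij}$ (correct --- the term $x_jx_{i+1}$ always lies in $(x_2,\ldots,x_n)$, and $x_ix_{j+1}$ escapes only for $\{i,j\}=\{1,n\}$), and the reverse inclusion by writing $f=p+q\,g_{1n}$, reducing modulo $(x_2,\ldots,x_n)$ in the domain $\KK[x_1,x_{n+1}]$ to force $q\in(x_2,\ldots,x_n)$, and then invoking the identities $x_k\,g_{1n}=x_1\,g_{kn}+x_n\,g_{1k}$ and $x_{k+1}\,g_{1n}=x_2\,g_{kn}+x_{n+1}\,g_{1k}$ for $2\le k\le n-1$, which I have checked and which do cover all of $(x_2,\ldots,x_n)\,g_{1n}$ with generators indexed by genuine edges of $K_n-e$. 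What your route buys is a self-contained argument that makes the mechanism of radicality visible (the colon-ideal computation $(I_{K_n-e}:g_{1n})\supseteq(x_2,\ldots,x_n)$ in disguise); what the paper's route buys is brevity, at the cost of importing the key fact from \cite{CDE}. The only caveat is the implicit assumption $n\ge 3$ so that the index range $2\le k\le n-1$ is nonempty, which is harmless since a connected labeled semi-Hamiltonian graph has at least three vertices.
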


\begin{proof}
(a) If $G=K_n$, then $I_G=I_X$ which is a prime ideal. Conversely, assume that $G$ is a labeled Hamiltonian graph and $I_G$ is radical, i.e. $I_G=\rad (I_G)$. Thus, by Theorem~\ref{minimal primes}~part~(a), we have $I_G=I_X$, and hence $\mu(I_G)=\mu(I_X)$ which implies that $|E(G)|=|E(K_n)|$. Thus $G=K_n$, as desired. 

(b) It was shown in \cite[Proposition~2.3]{CDE} that if $e=\{1,n\}$, then $I_{K_n-e}$ is a radical ideal with 
\begin{equation}\label{intersection}
I_{K_n-e}=I_X\cap (x_2,\ldots,x_n).
\end{equation} 

Conversely, assume that $G$ is a labeled semi-Hamiltonian graph and $I_G$ is radical. Thus, $I_G=\rad(I_G)=I_X\cap (x_2,\ldots,x_n)$, by Theorem~\ref{minimal primes}~part~(b). It follows from~\eqref{intersection} that $I_G=I_{K_n-e}$, and hence $G=K_n-e$.  
\end{proof}

Let $I$ be a homogeneous ideal in $S$. Recall that $S/I$ is a complete intersection if $\mu(I)=\height I$. 
%Since it is easily seen that the generators $g_{ij}$ minimally generate $I_G$, 
Therefore, we have $S/I_G$ is a complete intersection if and only if $\height I_G=|E(G)|$. So, it follows from Theorem~\ref{minimal primes} that $S/I_G$ is never a complete intersection if $G$ is a labeled Hamiltonian graph, since $|E(G)|\geq n$ while $\height I_G = n-1$. 

Next, we determine those semi-Hamiltonian graphs $G$ for which $S/I_G$ is a complete intersection. First we have the following general observation:  

\begin{Proposition}\label{tree}
	Let $G$ be a connected graph such that $S/I_G$ is a complete intersection. Then $G$ is a tree and $\height I_G=n-1$.  
\end{Proposition}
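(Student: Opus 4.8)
The plan is to convert the complete intersection hypothesis into a purely numerical statement about $|E(G)|$ and then squeeze $|E(G)|$ between two bounds — one coming from the height of $I_G$, the other from connectivity — so that both collapse to equalities at once. The whole argument is a short counting argument built entirely on results already in hand, so I would not expect to introduce any new machinery.

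First I would assemble the two algebraic ingredients. By the remark preceding the statement, the generators $g_{ij}$ form a minimal generating set of $I_G$, so $\mu(I_G)=|E(G)|$. By the definition of complete intersection recalled just above, $S/I_G$ being a complete intersection means exactly $\mu(I_G)=\height I_G$. Combining these gives the key identity $|E(G)|=\height I_G$. Next I would feed in the height bound: since $G$ is connected on $n$ vertices, Proposition~\ref{height}(b) yields $\height I_G\leq n-1$, and therefore
\[
|E(G)|=\height I_G\leq n-1 .
\]

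Finally I would invoke connectivity from the graph-theoretic side. A connected graph on $n$ vertices has at least $n-1$ edges, so $|E(G)|\geq n-1$. Together with the previous inequality this forces $|E(G)|=n-1$ and hence $\height I_G=n-1$. Since a connected graph with exactly $n-1$ edges is precisely a tree, we conclude that $G$ is a tree, as claimed. The only point requiring any care — rather than a genuine obstacle — is arranging the chain of (in)equalities so that every inequality is pinched into an equality simultaneously; beyond that the statement follows immediately from Proposition~\ref{height}(b), the identity $\mu(I_G)=|E(G)|$, and the elementary characterization of trees.
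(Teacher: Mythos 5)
Your proof is correct and follows exactly the paper's argument: combine $\mu(I_G)=|E(G)|$ and the complete intersection hypothesis to get $|E(G)|=\height I_G\leq n-1$ via Proposition~\ref{height}, then pinch against the connectivity bound $|E(G)|\geq n-1$ to conclude $G$ is a tree of height $n-1$. No differences worth noting.
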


\begin{proof}
	Let $|V(G)|=n$. Since $S/I_G$ is a complete intersection, we have $\height I_G=|E(G)|$. Proposition~\ref{height} implies that $|E(G)|\leq n-1$. Since $G$ is a connected graph with $n$ vertices, it follows that $|E(G)|=n-1$. This then implies that $G$ is a tree and $\height I_G=n-1$, as desired.   
\end{proof} 

\begin{Corollary}\label{complete intersection}
	Let $G$ be a connected labeled semi-Hamiltonian graph with $V(G)=[n]$. Then $S/I_G$ is a complete intersection if and only if $G=L_n$.  
\end{Corollary}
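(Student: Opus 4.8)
The plan is to prove both implications directly from the structural results already established, so the argument is quite short.

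For the converse direction, suppose $G=L_n$. The key observation is that $L_n$ is itself a connected labeled semi-Hamiltonian graph: it trivially contains $L_n$ as a subgraph, and $\{1,n\}\notin E(L_n)$ for $n\geq 3$. Hence Theorem~\ref{minimal primes}(b) applies to $G=L_n$ and yields $\height I_{L_n}=n-1$. On the other hand, by the remark preceding Corollary~\ref{radical} that $\mu(I_G)=|E(G)|$, we have $\mu(I_{L_n})=|E(L_n)|=n-1$. Since $\height I_{L_n}=n-1=\mu(I_{L_n})$, the ring $S/I_{L_n}$ is a complete intersection by definition.

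For the forward direction, suppose $S/I_G$ is a complete intersection. First I would invoke Proposition~\ref{tree}, which immediately gives that $G$ is a tree with $|E(G)|=n-1$. Then I use the semi-Hamiltonian hypothesis: by definition $L_n$ is a subgraph of $G$, so $E(L_n)\subseteq E(G)$. Since $L_n$ is a path on all $n$ vertices, it has exactly $n-1$ edges, matching $|E(G)|=n-1$. A containment of finite sets of equal cardinality is an equality, so $E(G)=E(L_n)$, that is, $G=L_n$.

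The proof is brief because the heavy lifting is already done: the computation of the minimal primes, and hence of the height, in Theorem~\ref{minimal primes}, together with the exclusion of non-trees in Proposition~\ref{tree}. The only point that demands care is verifying that $L_n$ genuinely qualifies as a labeled semi-Hamiltonian graph, so that Theorem~\ref{minimal primes}(b) can legitimately be applied to it in the converse; this requires $\{1,n\}\notin E(L_n)$, which holds precisely for $n\geq 3$, so the small cases $n\leq 2$ should be checked or excluded separately. Beyond this bookkeeping, everything reduces to combining the height equality $\height I_{L_n}=n-1$ with the edge count $\mu(I_G)=|E(G)|$.
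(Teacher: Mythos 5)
Your proof is correct and follows essentially the same route as the paper: the converse uses $\height I_{L_n}=n-1$ from Theorem~\ref{minimal primes} together with $\mu(I_{L_n})=|E(L_n)|=n-1$, and the forward direction combines Proposition~\ref{tree} with the observation that the only tree on $n$ vertices containing $L_n$ is $L_n$ itself (which you justify by the edge count). Your extra care about $L_n$ qualifying as labeled semi-Hamiltonian only for $n\geq 3$ is a reasonable piece of bookkeeping that the paper leaves implicit.
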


\begin{proof}
It is clear that $I_{L_n}$ is a complete intersection, since 
$\height I_G=|E(L_n)|=n-1$. Conversely, assume that $S/I_G$ is a complete intersection. Thus, by Proposition~\ref{tree}, $G$ is a tree.  
%$\height I_G=|E(G)|$, and hence by Theorem~\ref{minimal primes} $|E(G)|=n-1$. Since $G$ is a connected graph with~$n$ vertices, we have that $G$ is a tree. 
 But, the only tree with~$n$ vertices for which $L_n$ is a subgraph, is the path~$L_n$. Thus $G=L_n$, as desired.       	
\end{proof}

Let $I$ be a homogeneous ideal in $S$. Recall that $I$ is said to be an almost complete intersection if $\mu(I)=\height I + 1$. Therefore, we have $I_G$ is an almost complete intersection if and only if $\height I_G=|E(G)|-1$.

Also, recall that a graph is called \emph{unicyclic} if it contains exactly one cycle. A connected unicyclic graph is obtained from a tree by adding an edge between two non-adjacent vertices of the tree. 

\begin{Proposition}\label{almost complete intersection}
	Let $G$ be a connected graph with $V(G)=[n]$. Then the following hold:
	\begin{itemize}
		\item [{(a)}] Let $G$ be a labeled Hamiltonian graph. Then $I_G$ is an almost complete intersection if and only if $G=C_n$. 
		
		\item [{(b)}] Let $G$ be a labeled semi-Hamiltonian graph. Then $I_G$ is an almost complete intersection if and only if $G$ is a unicyclic graph obtained from $L_n$ by adding the edge $\{t,t+s\}$ for some $t,s$ with $1\leq t \leq n-2$ and $s\geq 2$. 
	\end{itemize}	
\end{Proposition}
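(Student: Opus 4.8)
The plan is to reduce both parts to a single edge count. By the remark preceding this proposition, $I_G$ is an almost complete intersection exactly when $\height I_G = |E(G)| - 1$, and since $\mu(I_G) = |E(G)|$ and Theorem~\ref{minimal primes} gives $\height I_G = n-1$ for every connected labeled (semi-)Hamiltonian graph, this condition is equivalent to $|E(G)| = n$. Thus the whole statement amounts to describing which labeled Hamiltonian (respectively, semi-Hamiltonian) graphs on $[n]$ have precisely $n$ edges. I would record this equivalence once at the start and invoke it in both parts.

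For part~(a), the ``if'' direction is immediate: if $G = C_n$ then $|E(G)| = n = \height I_G + 1$. For the ``only if'' direction, the reduction gives $|E(G)| = n$; since $G$ is labeled Hamiltonian, $C_n$ is a subgraph, so $E(C_n) \subseteq E(G)$ with $|E(C_n)| = n = |E(G)|$, which forces $E(G) = E(C_n)$ and hence $G = C_n$.

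For part~(b), I would first treat the ``if'' direction: a graph obtained from $L_n$ by adjoining a single chord $\{t, t+s\}$ with $s \geq 2$ contains $L_n$ as a subgraph and, being semi-Hamiltonian by hypothesis, does not contain $\{1,n\}$; it satisfies $|E(G)| = (n-1) + 1 = n$, so the reduction yields that $I_G$ is an almost complete intersection. For the converse, the reduction again gives $|E(G)| = n$, and since $L_n \subseteq G$ with $|E(L_n)| = n-1$, the graph $G$ is exactly $L_n$ together with one extra edge $e$. Adjoining a single edge to the tree $L_n$ creates exactly one cycle, so $G$ is unicyclic; writing $e = \{t, t+s\}$ with $t < t+s \leq n$, the fact that $e \notin E(L_n)$ forces $s \geq 2$, and then $1 \leq t \leq n-s \leq n-2$ gives the stated range.

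The argument is routine once Theorem~\ref{minimal primes} is granted, so I do not anticipate a genuine obstacle; the only point requiring care is the bookkeeping in part~(b), namely checking that the added chord is never the forbidden edge $\{1,n\}$ (which the semi-Hamiltonian hypothesis guarantees, since otherwise $G$ would equal $C_n$ and thus be Hamiltonian) and confirming that the ``$L_n$ plus one chord'' description exhausts all semi-Hamiltonian graphs on $[n]$ with exactly $n$ edges.
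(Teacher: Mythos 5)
Your proposal is correct and follows essentially the same route as the paper: both reduce the problem, via $\mu(I_G)=|E(G)|$ and the height computation of Theorem~\ref{minimal primes}, to the single combinatorial condition $|E(G)|=n$, and then identify the labeled Hamiltonian (resp.\ semi-Hamiltonian) graphs with exactly $n$ edges. Your extra remark that the added chord in part~(b) cannot be $\{1,n\}$ (by the semi-Hamiltonian hypothesis) is a correct and slightly more careful piece of bookkeeping than the paper's one-line combinatorial conclusion.
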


\begin{proof}
By Theorem~\ref{minimal primes}, if $G$ is either a labeled Hamiltonian graph or a labeled semi-Hamiltonian graph, then the ideal $I_G$ is an almost complete intersection if and only if $|E(G)|=n$. The latter equivalently means that $G$ is a unicyclic graph, since $G$ is connected. 	
	
(a) Let $G$ be a labeled Hamiltonian graph. Then, the only unicyclic graph with~$n$ vertices which has $C_n$ as a subgraph is the cycle~$C_n$ itself. Hence, the statement follows. 

(b) Let $G$ be a labeled semi-Hamiltonian graph. Then, the only unicyclic graphs with $L_n$ as a subgraph are those ones obtained from $L_n$ by connecting two non-adjacent vertices of $L_n$ by an edge. This implies the desired result.      	
\end{proof}	

Since closed graphs are chordal, it follows immediately from Proposition~\ref{almost complete intersection} that:
	
\begin{Corollary}\label{closed}
Let $G$ be a closed graph on the vertex set $[n]$. Then $S/I_G$ is almost complete intersection if and only if $G$ is a unicyclic graph obtained from $L_n$ by adding the edge $\{t,t+2\}$ for some $t=1,\ldots, n-2$.  	
\end{Corollary}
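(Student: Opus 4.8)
The plan is to read the statement off Proposition~\ref{almost complete intersection} once we record that closed graphs are chordal and contain the path $L_n$. First I would note the dichotomy: a closed graph $G$ on $[n]$ has $L_n$ as a subgraph, because consecutive vertices $i,i+1$ always lie in a common maximal clique (an interval), so every edge $\{i,i+1\}$ is present. Moreover $\{1,n\}\in E(G)$ would force $1$ and $n$ into a common clique, i.e. the interval $[1,n]$, hence $G=K_n$. Thus a non-complete closed graph is labeled semi-Hamiltonian and Proposition~\ref{almost complete intersection}(b) applies, while the complete case $G=K_n$ is labeled Hamiltonian and falls under part~(a).

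For the forward implication, suppose $G$ is closed and $I_G$ is an almost complete intersection. By Proposition~\ref{almost complete intersection} this forces $G$ to be unicyclic: in the semi-Hamiltonian case it is obtained from $L_n$ by adjoining a single edge $\{t,t+s\}$ with $s\geq 2$, and in the Hamiltonian case $G=C_n$. The crux is a chordality argument: in a unicyclic graph the unique cycle is necessarily induced, since any chord would split it into two cycles and produce a second one. As $G$ is chordal it has no induced cycle of length $\geq 4$, so this cycle must be a triangle. Adjoining $\{t,t+s\}$ to $L_n$ creates a cycle on the vertices $t,t+1,\dots,t+s$ of length $s+1$, and requiring $s+1=3$ gives $s=2$; hence the added edge is $\{t,t+2\}$ with $1\leq t\leq n-2$. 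The Hamiltonian alternative $G=C_n$ is subsumed, since $C_n$ is chordal only for $n=3$, where $C_3=L_3+\{1,3\}$ already has this form.

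For the converse, I would verify that $G=L_n+\{t,t+2\}$ is both closed and an almost complete intersection. It is closed because its maximal cliques are the triangle on $[t,t+2]$ together with the remaining edges $\{i,i+1\}$, all of which are intervals; and it is unicyclic with $|E(G)|=n$. Since $G$ is labeled semi-Hamiltonian (or equals $C_3$ when $n=3$), Theorem~\ref{minimal primes} gives $\height I_G=n-1$, so that $\mu(I_G)=|E(G)|=n=\height I_G+1$, i.e. $I_G$ is an almost complete intersection. The whole argument is short, exactly as the phrase ``it follows immediately'' suggests; the only point requiring care is the bookkeeping of the degenerate value $n=3$, where the triangle $L_3+\{1,3\}$ coincides with the complete Hamiltonian graph $C_3=K_3$ and must be reconciled between parts~(a) and~(b).
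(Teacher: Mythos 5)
Your proposal is correct and follows essentially the same route as the paper, which deduces the corollary from Proposition~\ref{almost complete intersection} together with the chordality of closed graphs; your observation that the unique cycle of a unicyclic chordal graph must be an (induced) triangle, forcing $s=2$, is exactly the step the paper leaves implicit in its ``it follows immediately.'' The additional bookkeeping (that a connected closed graph contains $L_n$ and is semi-Hamiltonian unless it is $K_n$, and the reconciliation of the case $n=3$) fleshes out rather than alters the paper's argument.
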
	

Let $I$ be an ideal of $S$. The \emph{arithmetical rank} of $I$, denoted by $\ara (I)$, is the least integer $r$ such that 
$\rad (I)=\rad (f_1,\ldots, f_r)$ for some $f_1,\ldots, f_r\in S$. It is well known that $\height I\leq \ara (I)$. The ideal $I$ is called a \emph{set-theoretic complete intersection} if $\height I = \ara (I)$.  

In \cite[Corollary~2.4]{CDE}, it was shown that the Hankel edge ideals of all connected closed graphs are set-theoretic complete intersection. In the next proposition, we generalize this result to all connected labeled Hamiltonian and semi-Hamiltonian graphs. 

\begin{Proposition}\label{set theoretic}
	Let $G$ be a connected labeled Hamiltonian or semi-Hamiltonian 
	graph with $V(G)=[n]$. Then $\ara (I_G)=n-1$. In particular, $I_G$ is a set-theoretic complete intersection. 
\end{Proposition}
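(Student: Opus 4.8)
The plan is to establish the two inequalities $\height I_G \le \ara(I_G)$ and $\ara(I_G)\le n-1$ separately; since $\height I_G=n-1$ by Theorem~\ref{minimal primes}, together they give $\ara(I_G)=n-1=\height I_G$, which is precisely the set-theoretic complete intersection assertion. The first inequality is the general fact recalled just above, so all the work lies in the upper bound $\ara(I_G)\le n-1$. The key leverage I would use is that $\ara$ depends only on the radical of its argument: if $\rad(I)=\rad(J)$ then $\ara(I)=\ara(J)$, because the defining condition $\rad(f_1,\ldots,f_r)=\rad(I)$ is phrased entirely in terms of $\rad(I)$. By Theorem~\ref{minimal primes} I therefore know $\rad(I_G)$ exactly, and I may replace $I_G$ by any ideal sharing its minimal primes.

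First I would treat the semi-Hamiltonian case. Since $L_n$ is a subgraph of $G$, the $n-1$ binomials $g_{1,2},g_{2,3},\ldots,g_{n-1,n}$, where $g_{i,i+1}=x_ix_{i+2}-x_{i+1}^2$, all lie in $I_G$ and generate $I_{L_n}$. By Theorem~\ref{minimal primes}(b) the ideals $I_G$ and $I_{L_n}$ have the same minimal primes $\{I_X,(x_2,\ldots,x_n)\}$, hence $\rad(I_G)=\rad(I_{L_n})=\rad(g_{1,2},\ldots,g_{n-1,n})$. Thus $\rad(I_G)$ is the radical of an ideal generated by $n-1$ elements, giving $\ara(I_G)\le n-1$ immediately. (Equivalently, $I_{L_n}$ is a complete intersection by Corollary~\ref{complete intersection}, so its $n-1$ generators already realize the bound.)

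For the Hamiltonian case, Theorem~\ref{minimal primes}(a) gives $\Min(I_G)=\{I_X\}$, so $\rad(I_G)=I_X$. By the radical-invariance of $\ara$ it then suffices to bound $\ara(I_X)$. Here $I_X=I_{K_n}$ and the complete graph $K_n$ is a connected closed graph, so \cite[Corollary~2.4]{CDE} applies and shows $I_{K_n}$ is a set-theoretic complete intersection; since $\height I_X=n-1$, this yields $\ara(I_X)=n-1$, and therefore $\ara(I_G)=\ara(I_X)=n-1$.

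I expect the main obstacle to be exactly this last step. Unlike the semi-Hamiltonian case, the radical of $I_G$ is now the full rational-normal-curve ideal $I_X$, which is far from being a complete intersection, so the $n-1$ consecutive minors $g_{i,i+1}$ no longer suffice: their radical retains the spurious component $(x_2,\ldots,x_n)$. Producing $n-1$ elements whose radical is exactly $I_X$ is the classical fact that the rational normal curve is a set-theoretic complete intersection, which I would invoke through \cite[Corollary~2.4]{CDE} rather than reprove.
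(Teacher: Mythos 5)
Your proof is correct and follows essentially the same route as the paper: the lower bound from $\height I_G=n-1$, and the upper bound via the observation that $\ara$ depends only on the radical, handling the semi-Hamiltonian case through $\rad(I_G)=\rad(I_{L_n})$ and the Hamiltonian case through the fact that the rational normal curve ideal $I_X$ is a set-theoretic complete intersection. The only (immaterial) difference is that for the latter fact you invoke \cite[Corollary~2.4]{CDE} applied to the closed graph $K_n$, whereas the paper cites \cite[Proposition~1]{B} directly.
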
	

\begin{proof}
First note that $\ara (I_G)\geq n-1$, since $\height I_G=n-1$, by Theorem~\ref{minimal primes}. 

If $G$ is a labeled Hamiltonian graph, then it follows from~\cite[Proposition~1]{B} that $\ara (I_G)\leq n-1$, (see also \cite[Section~1]{RV} and \cite{V}). 

If $G$ is a labeled semi-Hamiltonian graph, then by Theorem~\ref{minimal primes} we have 
$\Min (I_G)=\Min (I_{L_n})=\{I_X,(x_2,\ldots,x_n)\}$, and hence 
$\rad (I_G)=\rad (I_{L_n})$. This implies that $\ara (I_G)\leq n-1$.  

Therefore, in both cases, we get the equality $\ara (I_G)=n-1$, as desired. The ``in particular" part is then immediate by definition.  	
\end{proof}

\usetikzlibrary{arrows}
\begin{figure}[h!]
	\begin{tikzpicture}[scale = 1]
	\draw [line width=1.6pt] (-3.66,2.99)-- (-2,3);
	\draw [line width=1.6pt] (-2,3)-- (0,3);
	\draw [line width=1.6pt] (0,3)-- (0,5.03);
	\draw [line width=1.6pt] (0,5.03)-- (-2,5);
	\draw [line width=1.6pt] (-2,5)-- (-2,3);
	\draw (-3.98,2.9) node[anchor=north west] {$1$};
	\draw (-2.18,2.9) node[anchor=north west] {$2$};
	\draw (-2.44,5.63) node[anchor=north west] {$3$};
	\draw (-0.05,5.63) node[anchor=north west] {$4$};
	\draw (-0.05,2.9) node[anchor=north west] {$5$};
	\begin{scriptsize}
	\draw [fill=black] (-3.66,2.99) circle (2pt);
	\draw [fill=black] (-2,3) circle (2pt);
	\draw [fill=black] (0,3) circle (2pt);
	\draw [fill=black] (0,5.03) circle (2pt);
	\draw [fill=black] (-2,5) circle (2pt);
	\end{scriptsize}
	\end{tikzpicture}
	\caption{A labeled semi-Hamiltonian graph with non-Cohen-Macaulay Hankel edge ideal}
	\label{CM}
\end{figure}
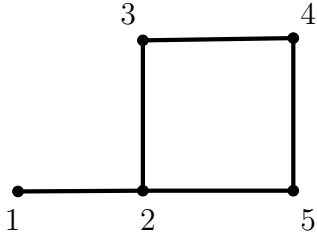	

We would like to remark that in \cite[Proposition~1.2]{CDE} it was shown that if $G$ is a closed graph, then $S/I_G$ is Cohen-Macaulay. However, this is not always the case in the more general case of labeled semi-Hamiltonian graphs. For example, let $G$ be the graph depicted in Figure~\ref{CM} which is a labeled semi-Hamiltonian non-closed graph. Our computations with \emph{Macaulay~2} show that $\projdim (S/I_G)=5$ while by Theorem~\ref{minimal primes} we have $\height I_G=4$. So, it would be interesting to ask which labeled semi-Hamiltonian graphs admit Cohen-Macaulay Hankel edge ideal.

	\section{Complete intersection Hankel edge ideals }\label{c.i.}
	
In the view of Proposition~\ref{tree}, now in this section we are interested in trees and in investigating about those trees $T$ for which $S/I_T$ is a complete intersection. 
	
Note that Proposition~\ref{tree}  is independent of the labeling of $G$. However, as we mentioned in Section~\ref{introduction}, in general the ideal $I_G$ strongly depends on the labeling of $G$. Therefore, now for trees we fix a natural labeling which we call \emph{rooted labeling}. First recall that $N_G(i)$ is the set of the neighbors of $i$, i.e. vertices adjacent to the vertex~$i$, and the degree of the vertex~$i$, denoted by $\deg (i)$ is equal to $|N_G(i)|$. We also set $N_G[i]=N_G(i)\cup \{i\}$.  

Let $T$ be a tree with $n$ vertices. Roughly speaking, to give a rooted labeling to $T$, we give the labels $1,\ldots,n$ to the vertices ``consecutively" as follows: we pick a vertex as the \emph{root} with the label~1, and then we label its neighbors in any order. Then, we label the neighbors of these new labeled vertices in the increasing order, and we continue this process to get all the vertices labeled. 

More precisely, pick a vertex as the root and give the label~1 to it. If $|N_T(1)|=t_1$, then give the labels $2,\ldots , t_1+1$ to the neighbors of~1 in an arbitrary order. Next, if $|N_T(2)|=t_2+1$, then label the $t_2$ unlabeled vertices in $N_T(2)$ by $t_1+2,\ldots, t_1+t_2+1$ in an arbitrary order. Similarly, for any $i$ with $3\leq i\leq t_1+1$, if $|N_T(i)|=t_i+1$, then label the $t_i$ unlabeled  vertices in $N_T(i)$ by $t_1+t_2+\cdots+t_{i-1}+2,\ldots , t_1+t_2+\cdots +t_i+1$ in an arbitrary order. Then repeat the same procedure for the neighbors of the vertices $i=t_1+2,\ldots, t_1+t_2+\cdots +t_{t_1+1}+1$. 
%Note that if $t_i=0$ for $i\neq 1$, then $i$ is a leaf. 
Then, by continuing this process, all the vertices of $T$ are labeled. 
Figure~\ref{Example} depicts an example of a tree with a rooted labeling. 

In a rooted labeled tree, we call those neighbors of a vertex $i$ whose  labels are greater than~$i$, the \emph{children} of $i$. For example, the children of the vertex $2$ in the graph of Figure~\ref{Example} are the vertices $5$ and $6$.  

\usetikzlibrary{arrows}
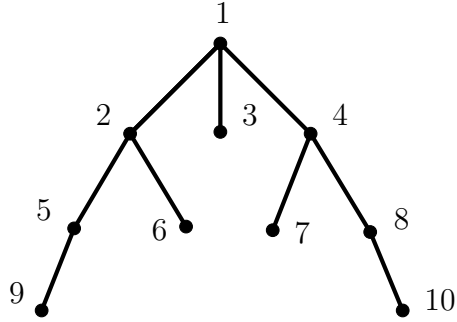
\begin{figure}[h!]
	\begin{tikzpicture}[scale = 1.2]
\draw [line width=1.6pt] (0.98,0.96)-- (-0.02,-0.04);
\draw [line width=1.6pt] (-0.02,-0.04)-- (0.98,0.96);
\draw [line width=1.6pt] (0.98,0.96)-- (0.98,-0.02);
\draw [line width=1.6pt] (0.98,0.96)-- (1.98,-0.04);
\draw (0.8,1.53) node[anchor=north west] {$1$};
\draw (-0.52,0.4) node[anchor=north west] {$2$};
\draw (1.1,0.4) node[anchor=north west] {$3$};
\draw (2.1,0.4) node[anchor=north west] {$4$};
\draw (1.68,-0.91) node[anchor=north west] {$7$};
\draw (2.78,-0.75) node[anchor=north west] {$8$};
\draw [line width=1.6pt] (1.98,-0.04)-- (2.64,-1.13);
\draw [line width=1.6pt] (1.98,-0.04)-- (1.56,-1.11);
\draw [line width=1.6pt] (-0.02,-0.04)-- (-0.64,-1.09);
\draw [line width=1.6pt] (-0.02,-0.04)-- (0.6,-1.07);
\draw [line width=1.6pt] (-0.64,-1.09)-- (-1,-2);
\draw [line width=1.6pt] (2.64,-1.13)-- (3,-2);
\draw (-1.48,-1.57) node[anchor=north west] {$9$};
\draw (-1.18,-0.65) node[anchor=north west] {$5$};
\draw (0.1,-0.87) node[anchor=north west] {$6$};
\draw (3.12,-1.65) node[anchor=north west] {$10$};
\begin{scriptsize}
	\draw [fill=black] (0.98,0.96) circle (2pt);
	\draw [fill=black] (-0.02,-0.04) circle (2pt);
	\draw [fill=black] (0.98,-0.02) circle (2pt);
	\draw [fill=black] (1.98,-0.04) circle (2pt);
	\draw [fill=black] (2.64,-1.13) circle (2pt);
	\draw [fill=black] (1.56,-1.11) circle (2pt);
	\draw [fill=black] (-0.64,-1.09) circle (2pt);
	\draw [fill=black] (0.6,-1.07) circle (2pt);
	\draw [fill=black] (-1,-2) circle (2pt);
	\draw [fill=black] (3,-2) circle (2pt);
\end{scriptsize}
\end{tikzpicture}
\caption{A tree with a rooted labeling}
\label{Example}
\end{figure}

\begin{Theorem} \label{rooted tree}
Let $T$ be a tree on $n$ vertices with a rooted labeling. If $T$ is not a path, then $\height I_T\leq n-2$. In particular, $S/I_T$ is not a complete intersection.      
\end{Theorem}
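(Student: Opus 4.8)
The plan is to avoid computing $\height I_T$ or its minimal primes directly, and instead over-approximate $I_T$ by a larger ideal $J$ that has few generators: since $I_T\subseteq J$ forces $\height I_T\le\height J$, and since Krull's height theorem gives $\height J\le\mu(J)$, it suffices to build such a $J$ with $\mu(J)\le n-2$. The whole difficulty is then shifted to choosing $J$ cleverly.

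First I would translate the hypothesis ``$T$ is not a path'' into a usable statement: a tree all of whose vertices have degree at most $2$ is a path, so there is a vertex $v$ with $\deg(v)\ge 3$. (One may, but need not, take $v$ to be a branch vertex of the rooted labeling; the argument does not actually use the labeling.) The key construction is then to replace all binomials coming from edges through $v$ by the two linear forms $x_v,x_{v+1}$. Concretely, set
\[
J=(x_v,x_{v+1})+\bigl(g_e : e\in E(T),\ v\notin e\bigr).
\]
I claim $I_T\subseteq J$. Indeed, if an edge $e$ avoids $v$, then $g_e$ is literally one of the chosen generators; and if $e$ is incident to $v$, say $e=\{v,j\}$ or $e=\{i,v\}$, then the corresponding binomial $g=x_vx_{j+1}-x_jx_{v+1}$ (resp. $g=x_ix_{v+1}-x_vx_{i+1}$) has every monomial divisible by $x_v$ or by $x_{v+1}$, so $g\in(x_v,x_{v+1})\subseteq J$. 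Thus every generator $g_{ij}$ of $I_T$ lies in $J$.

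To finish, I would count the generators of $J$: it is generated by the two variables $x_v,x_{v+1}$ together with the binomials $g_e$ for the $|E(T)|-\deg(v)=(n-1)-\deg(v)$ edges $e$ not meeting $v$. Hence $\mu(J)\le 2+(n-1)-\deg(v)=n+1-\deg(v)\le n-2$, using $\deg(v)\ge 3$. Combining, $\height I_T\le\height J\le\mu(J)\le n-2$, which is the first assertion. For the ``in particular'' part, recall from the discussion preceding the theorem that the $g_{ij}$ minimally generate $I_T$, so $\mu(I_T)=|E(T)|=n-1$; since $S/I_T$ is a complete intersection exactly when $\height I_T=\mu(I_T)$, the strict inequality $\height I_T\le n-2<n-1=\mu(I_T)$ shows $S/I_T$ is not a complete intersection.

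I expect the main (and essentially only) conceptual obstacle to be the idea of bounding the height from above by passing to the enlarged ideal $J$ and invoking $\height(\,\cdot\,)\le\mu(\,\cdot\,)$, rather than trying to describe $\Min(I_T)$ as was done for (semi-)Hamiltonian graphs; once one decides to ``linearize'' the star of edges at a degree-$\ge 3$ vertex by throwing in $x_v,x_{v+1}$, the inclusion $I_T\subseteq J$ and the generator count are routine. The only standard facts I rely on are the monotonicity $I\subseteq J\Rightarrow\height I\le\height J$ (any minimal prime of $J$ contains a minimal prime of $I$) and Krull's bound $\height J\le\mu(J)$.
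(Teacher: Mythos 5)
Your proof is correct, but it takes a genuinely different route from the paper's. The paper argues by an extended case analysis on where a branch vertex sits relative to the rooted labeling, in each case exhibiting an explicit prime ideal of height $n-2$ (generated by $n-2$ variables, or by $n-3$ variables together with a single binomial $g_{ij}$, which is prime) containing $I_T$. You instead take any vertex $v$ with $\deg(v)\geq 3$ (which exists because $T$ is not a path), enlarge $I_T$ to $J=(x_v,x_{v+1})+(g_e:\ e\in E(T),\ v\notin e)$ --- the inclusion $I_T\subseteq J$ holds since every monomial of a generator $g_e$ with $v\in e$ is divisible by $x_v$ or by $x_{v+1}$ --- and then combine the monotonicity of height under inclusion with Krull's height theorem to get $\height I_T\leq\height J\leq \mu(J)\leq 2+(n-1)-\deg(v)\leq n-2$. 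All steps check out: $J$ is proper (it is generated by forms of positive degree), the generator count is right, and the ``in particular'' part follows either from Proposition~\ref{tree} or, as you argue, from $\mu(I_T)=|E(T)|=n-1$ (justified by the remark preceding Corollary~\ref{radical}). What your argument buys is brevity and, more importantly, independence of the labeling: it shows $\height I_T\leq n-2$ for \emph{every} vertex labeling of a non-path tree, which in particular addresses part of the question posed at the end of Section~\ref{c.i.} (no relabeling of a tree that is not a path can make $S/I_T$ a complete intersection). What it gives up is the explicit list of height-$(n-2)$ primes over $I_T$ that the paper's case analysis produces, though the paper does not use that extra information later.
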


\begin{proof}
Since $T$ is not a path, it has a vertex of degree~$\geq 3$. First assume that the root~1 has degree at least~3. Thus, according to the labeling of $T$, we have that the vertices with the labels 2, 3 and 4 are the neighbors of~1. 

If $n\notin N_T[3]\cup N_T[4]$, then $P=(x_1,x_2,x_5,x_6,\ldots,x_n)$ is a prime ideal containing $I_T$ which implies that $\height I_T\leq n-2$. 

If $n\in N_T[3]\cup N_T[4]$, then we distinguish the following cases:

(1) If $n=4$, then $P=(x_1,x_2)$ is a prime ideal containing $I_T$, and hence $\height I_T\leq 2=n-2$. 

(2) If $n=5\in N_T[3]$, then $P=(x_1,x_2,g_{35})$ is a prime ideal containing $I_T$, since $(g_{35})$ is a prime ideal (of height~1) in $\KK[x_3,x_4,x_5,x_6]$. Thus, $\height I_T\leq 3=n-2$. 

(3) If $n=5\in N_T[4]$, then $P=(x_1,x_2,g_{45})$ is a prime ideal containing $I_T$, and hence $\height I_T\leq 3=n-2$.  

(4) If $n=6\in N_T[3]$, then $T$ is one of the trees $T_1, T_2, T_3$ in Figure~\ref{T1,T2,T3}. It is easily seen that in each case,  $P=(x_1,x_2,x_3,x_4)$ is a prime ideal containing $I_T$, and hence $\height I_T\leq 4=n-2$.  

\usetikzlibrary{arrows}
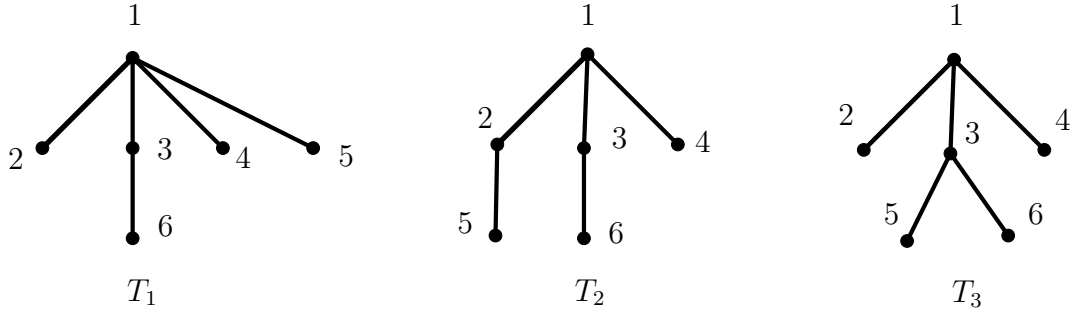
\begin{figure}[h!]
	\begin{tikzpicture}[scale = 1.2]
\draw [line width=1.6pt] (-6,2)-- (-7,1);
\draw [line width=2pt] (-7,1)-- (-6,2);
\draw [line width=1.6pt] (-6,2)-- (-6,1);
\draw [line width=1.6pt] (-6,2)-- (-5,1);
\draw [line width=1.6pt] (-6,1)-- (-6,0);
\draw [line width=1.6pt] (-6,2)-- (-4,1);
\draw [line width=1.6pt] (-0.96,2.04)-- (-1.96,1.04);
\draw [line width=2pt] (-1.96,1.04)-- (-0.96,2.04);
\draw [line width=1.6pt] (-0.96,2.04)-- (-1,1);
\draw [line width=1.6pt] (-0.96,2.04)-- (0.04,1.04);
\draw [line width=1.6pt] (-1.96,1.04)-- (-1.98,0.03);
\draw [line width=1.6pt] (-1,1)-- (-1,0);
\draw [line width=1.6pt] (3.1,1.98)-- (2.1,0.98);
\draw [line width=1.6pt] (2.1,0.98)-- (3.1,1.98);
\draw [line width=1.6pt] (3.1,1.98)-- (3.06,0.94);
\draw [line width=1.6pt] (3.1,1.98)-- (4.1,0.98);
\draw [line width=1.6pt] (3.06,0.94)-- (2.58,-0.03);
\draw [line width=1.6pt] (3.06,0.94)-- (3.7,0.03);
\draw (-6.18,2.71) node[anchor=north west] {$1$};
\draw (-7.5,1.11) node[anchor=north west] {$2$};
\draw (-5.85,1.23) node[anchor=north west] {$3$};
\draw (-4.98,1.13) node[anchor=north west] {$4$};
\draw (-3.84,1.15) node[anchor=north west] {$5$};
\draw (-5.84,0.37) node[anchor=north west] {$6$};
\draw (-1.16,2.71) node[anchor=north west] {$1$};
\draw (-2.3,1.57) node[anchor=north west] {$2$};
\draw (-0.81,1.35) node[anchor=north west] {$3$};
\draw (0.11,1.3) node[anchor=north west] {$4$};
\draw (-2.52,0.4) node[anchor=north west] {$5$};
\draw (-0.85,0.29) node[anchor=north west] {$6$};
\draw (2.92,2.71) node[anchor=north west] {$1$};
\draw (1.7,1.6) node[anchor=north west] {$2$};
\draw (3.1,1.4) node[anchor=north west] {$3$};
\draw (4.1,1.55) node[anchor=north west] {$4$};
\draw (2.2,0.5) node[anchor=north west] {$5$};
\draw (3.8,0.5) node[anchor=north west] {$6$};
\draw (-6.18,-0.33) node[anchor=north west] {$T_1$};
\draw (-1.22,-0.33) node[anchor=north west] {$T_2$};
\draw (2.96,-0.35) node[anchor=north west] {$T_3$};
\begin{scriptsize}
\draw [fill=black] (-6,2) circle (2pt);
\draw [fill=black] (-7,1) circle (2pt);
\draw [fill=black] (-6,1) circle (2pt);
\draw [fill=black] (-5,1) circle (2pt);
\draw [fill=black] (-6,0) circle (2pt);
\draw [fill=black] (-4,1) circle (2pt);
\draw [fill=black] (-0.96,2.04) circle (2pt);
\draw [fill=black] (-1.96,1.04) circle (2pt);
\draw [fill=black] (-1,1) circle (2pt);
\draw [fill=black] (0.04,1.04) circle (2pt);
\draw [fill=black] (-1.98,0.03) circle (2pt);
\draw [fill=black] (-1,0) circle (2pt);
\draw [fill=black] (3.1,1.98) circle (2pt);
\draw [fill=black] (2.1,0.98) circle (2pt);
\draw [fill=black] (3.06,0.94) circle (2pt);
\draw [fill=black] (4.1,0.98) circle (2pt);
\draw [fill=black] (2.58,-0.03) circle (2pt);
\draw [fill=black] (3.7,0.03) circle (2pt);
\end{scriptsize}
	\end{tikzpicture}
	\caption{The graphs $T_1$, $T_2$ and $T_3$}
	\label{T1,T2,T3}
\end{figure}

(5) If $n=6\in N_T[4]$, then $T$ is one of the trees $T'_1, T'_2, T'_3, T'_4$ in Figure~\ref{T'1,T'2,T'3,T'4}. It is easily seen that if $T=T'_1, T'_2, T'_4$, then $P=(x_1,x_2,x_4,x_5)$ is a prime ideal containing $I_T$, and if $T=T'_3$, then $P'=(x_1,x_2,x_4,x_6)$ is a prime ideal containing $I_T$. Therefore, we have $\height I_T\leq 4=n-2$.

\usetikzlibrary{arrows}
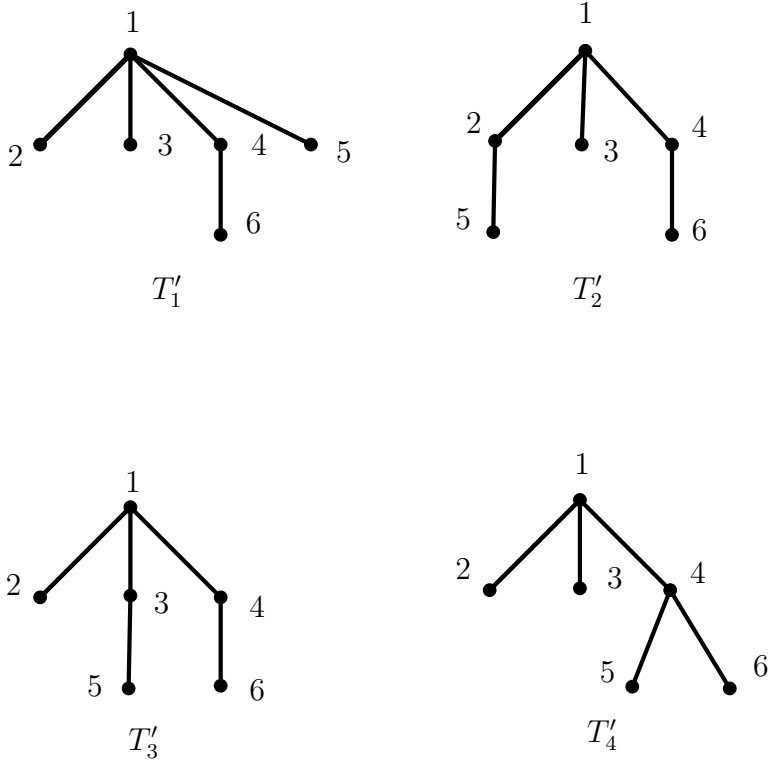
\begin{figure}[h!]
	\begin{tikzpicture}[scale = 1.2]
\draw [line width=1.6pt] (-1,2)-- (-2,1);
\draw [line width=2pt] (-2,1)-- (-1,2);
\draw [line width=1.6pt] (-1,2)-- (-1,1);
\draw [line width=1.6pt] (-1,2)-- (0,1);
\draw [line width=1.6pt] (-1,2)-- (1,1);
\draw [line width=1.6pt] (4.04,2.04)-- (3.04,1.04);
\draw [line width=2pt] (3.04,1.04)-- (4.04,2.04);
\draw [line width=1.6pt] (4.04,2.04)-- (4,1);
\draw [line width=1.6pt] (4.04,2.04)-- (5,1);
\draw [line width=1.6pt] (3.04,1.04)-- (3.02,0.03);
\draw [line width=1.6pt] (-1,-3.02)-- (-2,-4.02);
\draw [line width=1.6pt] (-2,-4.02)-- (-1,-3.02);
\draw [line width=1.6pt] (-1,-3.02)-- (-1,-4);
\draw [line width=1.6pt] (-1,-3.02)-- (0,-4.02);
\draw [line width=1.6pt] (-1,-4)-- (-1.02,-5.03);
\draw (-1.18,2.6) node[anchor=north west] {$1$};
\draw (-2.48,1.11) node[anchor=north west] {$2$};
\draw (-0.82,1.23) node[anchor=north west] {$3$};
\draw (0.22,1.23) node[anchor=north west] {$4$};
\draw (1.16,1.15) node[anchor=north west] {$5$};
\draw (0.16,0.37) node[anchor=north west] {$6$};
\draw (3.84,2.69) node[anchor=north west] {$1$};
\draw (2.6,1.47) node[anchor=north west] {$2$};
\draw (4.12,1.17) node[anchor=north west] {$3$};
\draw (5.1,1.43) node[anchor=north west] {$4$};
\draw (2.48,0.41) node[anchor=north west] {$5$};
\draw (5.1,0.29) node[anchor=north west] {$6$};
\draw (-1.18,-2.5) node[anchor=north west] {$1$};
\draw (-2.5,-3.65) node[anchor=north west] {$2$};
\draw (-0.86,-3.85) node[anchor=north west] {$3$};
\draw (0.2,-3.89) node[anchor=north west] {$4$};
\draw (-1.6,-4.77) node[anchor=north west] {$5$};
\draw (0.2,-4.81) node[anchor=north west] {$6$};
\draw (-0.88,-0.33) node[anchor=north west] {$T'_1$};
\draw (3.78,-0.33) node[anchor=north west] {$T'_2$};
\draw (-1.14,-5.33) node[anchor=north west] {$T'_3$};
\draw [line width=1.6pt] (0,1)-- (0,0);
\draw [line width=1.6pt] (5,1)-- (5,0);
\draw [line width=1.6pt] (0,-4.02)-- (0,-5);
\draw [line width=1.6pt] (3.98,-2.94)-- (2.98,-3.94);
\draw [line width=1.6pt] (2.98,-3.94)-- (3.98,-2.94);
\draw [line width=1.6pt] (3.98,-2.94)-- (3.98,-3.92);
\draw [line width=1.6pt] (3.98,-2.94)-- (4.98,-3.94);
\draw (3.8,-2.3) node[anchor=north west] {$1$};
\draw (2.48,-3.47) node[anchor=north west] {$2$};
\draw (4.16,-3.57) node[anchor=north west] {$3$};
\draw (5.08,-3.51) node[anchor=north west] {$4$};
\draw (4.08,-4.61) node[anchor=north west] {$5$};
\draw (5.78,-4.55) node[anchor=north west] {$6$};
\draw (3.94,-5.25) node[anchor=north west] {$T'_4$};
\draw [line width=1.6pt] (4.98,-3.94)-- (5.64,-5.03);
\draw [line width=1.6pt] (4.98,-3.94)-- (4.56,-5.01);
\begin{scriptsize}
	\draw [fill=black] (-1,2) circle (2pt);
	\draw [fill=black] (-2,1) circle (2pt);
	\draw [fill=black] (-1,1) circle (2pt);
	\draw [fill=black] (0,1) circle (2pt);
	\draw [fill=black] (1,1) circle (2pt);
	\draw [fill=black] (4.04,2.04) circle (2pt);
	\draw [fill=black] (3.04,1.04) circle (2pt);
	\draw [fill=black] (4,1) circle (2pt);
	\draw [fill=black] (5,1) circle (2pt);
	\draw [fill=black] (3.02,0.03) circle (2pt);
	\draw [fill=black] (-1,-3.02) circle (2pt);
	\draw [fill=black] (-2,-4.02) circle (2pt);
	\draw [fill=black] (-1,-4) circle (2pt);
	\draw [fill=black] (0,-4.02) circle (2pt);
	\draw [fill=black] (-1.02,-5.03) circle (2pt);
	\draw [fill=black] (0,0) circle (2pt);
	\draw [fill=black] (5,0) circle (2pt);
	\draw [fill=black] (0,-5) circle (2pt);
	\draw [fill=black] (3.98,-2.94) circle (2pt);
	\draw [fill=black] (2.98,-3.94) circle (2pt);
	\draw [fill=black] (3.98,-3.92) circle (2pt);
	\draw [fill=black] (4.98,-3.94) circle (2pt);
	\draw [fill=black] (5.64,-5.03) circle (2pt);
	\draw [fill=black] (4.56,-5.01) circle (2pt);
\end{scriptsize}
\end{tikzpicture}
\caption{The graphs $T'_1$, $T'_2$, $T'_3$ and $T'_4$}
\label{T'1,T'2,T'3,T'4}
\end{figure}

(6) If $n\geq 7$, then any vertex $i$ with $i\geq 5$ is a leaf. Thus, it is seen that $P=(x_1,x_2,x_3,x_4,x_5)$ is a prime ideal containing $I_T$, and hence $\height I_T\leq 5\leq n-2$. 

\medskip
Next, suppose that the degree of 1 is at most~2. Let $t\geq 2$ be the smallest label in $T$ with $\deg (t)\geq 3$, and let $t_1,\ldots,t_q$ be $q$ consecutive integers which are the labels of the children of $t$. 
First assume that there exists an $s=1,\ldots,q$ with $t_s > t+1$ such that $n\notin N_T(t_s)$. Let $s$ be the maximum integer with this property. 

If $t_s-1$ is not adjacent to 1, then we set 
$P=(x_2,x_3,\ldots, x_{t_s-1},x_{t_s+1},\ldots,x_n)$. Since in this case, $1$ is not adjacent to $t_s-1$ and $n$, and also $t_s$ is not adjacent to $n$, then it follows that $I_T\subseteq P$, and hence $\height I_T\leq n-2$. 

If $t_s-1$ is adjacent to 1, then it follows that $t_s=4$ and $t=2$. Therefore, 4 and 5 are the only neighbors of 2, and $n\geq 6$ is adjacent to 5. Thus, we set 
$P=(x_2,x_3,x_4,g_{5 6})$ in the case $n=6$, and $P=(x_2,\ldots, x_{n-1})$ in the case $n\geq 7$. Thus, it follows in both cases that $I_T\subseteq P$, and hence $\height I_T\leq n-2$.  

If such an $s$ does not exist, then it follows that $\deg (t)=3$ and the only children of $t$ are $t+1$ and $t+2$, and $n\in N_T(t+2)$. Now, we consider the following cases:
 
 (1) Suppose that $\deg (t+1)\geq 2$. Then, $t+3\notin N_T(t+2)$. We set $P=(x_2,x_3,\ldots, x_{t+2},x_{t+4},\ldots, x_n)$. Since $1$ is not adjacent to $t+2$ and since $n$ is a leaf with the neighbor $t+2$, it follows that $I_T\subseteq P$. 
 
 (2) Suppose that $\deg (t+1)=1$ and $n\geq t+4$. Then, we set $P=(x_2,x_3,\ldots, x_{n-1})$. Then, we have $I_T\subseteq P$.  
 
 (3) Suppose that $\deg (t+1)=1$ and $n=t+3$. Then, we set 
 $P=(x_2,x_3,\ldots, x_{t+1},g_{t+2,t+3})$. Then, we have $I_T\subseteq P$.
 
 In all the above cases, $P$ is a prime ideal in $S$ of height~$n-2$, which implies that $\height I_T\leq n-2$, and hence in neither of the cases $S/I_T$ is a complete intersection.           

The ``in particular" part is now immediate by Proposition~\ref{tree}. 	
\end{proof}

Now, by Theorem~\ref{rooted tree}, it remains to look at paths which have a rooted labeling. In the following we characterize all such paths with the complete intersection property. 

\begin{Theorem}\label{paths}
	Let $T$ be a path on $n$ vertices with a rooted labeling. Then
	 $S/I_T$ is a complete intersection if and only if the root of $T$ is either a leaf or the neighbor of a leaf. 
\end{Theorem}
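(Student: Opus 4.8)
The plan is to reduce everything to a height computation. Since $T$ is a tree, $\mu(I_T)=|E(T)|=n-1$, so by the definition of complete intersection recalled before Corollary~\ref{complete intersection} together with Proposition~\ref{height}(b), one has that $S/I_T$ is a complete intersection if and only if $\height I_T=n-1$. I would first settle the ``only if'' part by contraposition. Suppose the root is neither a leaf nor the neighbor of a leaf. As $T$ is a path, the root then has degree $2$, and its two neighbors, which receive the labels $2$ and $3$, are both non-leaves; hence each has a further neighbor, labelled $4$ and $5$ by the rooted labelling, so that $\{1,2\},\{1,3\},\{2,4\},\{3,5\}\in E(T)$. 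The corresponding generators $g_{12},g_{13},g_{24},g_{35}$ all lie in the height-three prime $(x_2,x_3,x_4)$, so $\height(g_{12},g_{13},g_{24},g_{35})\le 3$. Since $I_T$ has $n-1$ generators and each additional generator raises the height by at most one, $\height I_T\le 3+\bigl((n-1)-4\bigr)=n-2$, and $S/I_T$ is not a complete intersection.

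For the converse I must prove $\height I_T=n-1$ when the root is a leaf or a neighbor of a leaf. If the root is a leaf, the rooted labelling is the standard one, $T=L_n$, and $S/I_{L_n}$ is a complete intersection by Corollary~\ref{complete intersection}. If the root is a neighbor of a leaf, the labelling is determined up to the order in which the two neighbors of the root (the leaf and the next vertex) are labelled, yielding exactly two edge sets: Case~A, with $\{1,2\},\{1,3\},\{3,4\},\ldots,\{n-1,n\}$ (the leaf labelled $2$), and Case~B, with $\{1,2\},\{1,3\},\{2,4\},\{4,5\},\ldots,\{n-1,n\}$ (the leaf labelled $3$). In either case I would bound the height of an arbitrary minimal prime $P$ of $I_T$ from below, mimicking the proof of Theorem~\ref{minimal primes}.

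If $P$ contains no variable, then $P=I_X$ by Proposition~\ref{height}(c), of height $n-1$. Otherwise let $i$ be the least index with $x_i\in P$. Using the binomial relations I would force $i\in\{1,2\}$: whenever one of the edges $\{1,2\},\{1,3\},\{2,4\}$ or a consecutive edge $\{i-2,i-1\}$ is present, membership of $x_i$ drags a strictly smaller variable into $P$ (for instance $g_{12}$ forces $x_2\in P$ once $x_1x_3\in P$, and $g_{i-2\,i-1}=x_{i-2}x_i-x_{i-1}^2$ forces $x_{i-1}\in P$), contradicting minimality of $i$. For $i=2$ the relations $g_{12},g_{13}$ give $x_3,x_4\in P$, and the path-type generators cascade via $x_j\in P\Rightarrow x_{j+1}\in P$ to yield $(x_2,\dots,x_n)\subseteq P$, of height $n-1$. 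For $i=1$ one gets $x_1,x_2\in P$; passing to $S/(x_1,x_2)$ turns $I_T$ into the Hankel ideal of the sub-path on $\{3,\dots,n\}$ in Case~A and into $(x_3x_4,g_{45},\dots,g_{n-1\,n})$ in Case~B. Theorem~\ref{minimal primes} (in Case~B after splitting on $x_3x_4=0$ and running a short cascade in each branch) shows the residual ideal has height $n-3$, whence $\height P\ge(n-3)+2=n-1$. Thus every minimal prime has height $\ge n-1$, giving $\height I_T=n-1$ and the complete intersection property.

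The main obstacle is the converse. The natural attempt—ordering the generators so that each introduces a ``fresh'' variable and invoking a regular-sequence induction—fails, because the coefficient of that fresh variable can itself be a zerodivisor modulo the earlier generators (this already happens in Case~B, and in Case~A from the fifth generator on), even though the binomial as a whole remains a nonzerodivisor. The minimal-prime route above circumvents this, and its subtle point is the $i=1$ reduction in Case~B, where setting $x_1=x_2=0$ does \emph{not} produce a genuine path ideal but rather $(x_3x_4,g_{45},\dots)$; one must split on the factor $x_3x_4$ and treat each branch separately before Theorem~\ref{minimal primes} applies. A final routine check is that the two labellings listed above genuinely exhaust the neighbor-of-a-leaf case.
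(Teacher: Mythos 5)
Your argument is correct, and its converse direction is essentially the paper's own: classify the minimal primes of $I_T$ by the least index $i$ with $x_i\in P$, use the binomials $g_{12},g_{13},g_{24}$ and the consecutive generators $g_{i-2\,i-1}$ to force $i\in\{1,2\}$, identify $P=(x_2,\ldots,x_n)$ when $i=2$, and for $i=1$ pass to the quotient by $(x_1,x_2)$ and invoke Theorem~\ref{minimal primes} for the residual path ideal, concluding that every minimal prime has height $n-1$. The only organizational difference is in your Case~B with $i=1$: the paper splits on $x_1x_3\in Q$ and, when $x_1,x_3\in Q$, quotients by $(x_1,x_2,x_3)$ to land directly on the path $4,\ldots,n$, whereas you quotient by $(x_1,x_2)$ and then split on the residual monomial $x_3x_4$; the two are equivalent and both give height $(n-3)+2=n-1$. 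Where you genuinely diverge is the ``only if'' direction. The paper exhibits an explicit height-$(n-2)$ prime containing $I_T$ (with a separate prime for $n=6$); you instead note that the four generators $g_{12},g_{13},g_{24},g_{35}$ forced by the hypothesis all lie in the height-three prime $(x_2,x_3,x_4)$ and then use the fact that adjoining one generator raises the height by at most one, giving $\height I_T\le 3+(n-5)=n-2$. This is legitimate (you should state explicitly that the height-increase bound relies on $S$ being a polynomial ring, hence catenary), it avoids any case distinction on $n$, and it is in fact more robust: the prime $(x_2,x_3,x_5,\ldots,x_n)$ written in the paper fails to contain $g_{13}=x_1x_4-x_2x_3$, so your computation doubles as a repair of that step. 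One cosmetic caveat: in Case~A with $n=3$ the cascade for $i=2$ actually lands on $(x_2,x_3,x_4)$ rather than $(x_2,x_3)$, but the resulting height is still at least $n-1$, so your conclusion stands.
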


\begin{proof}
 Suppose that $S/I_T$ is a complete intersection. Assume on contrary that the root of $T$ is neither a leaf nor the neighbor of a leaf. Thus, we have $n\geq 5$ and $N_T(1)=\{2,3\}$, $N_T(2)=\{1,4\}$ and $N_T(3)=\{1,5\}$. If $n\neq 6$, then it follows that the prime ideal $P=(x_2,x_3,x_5,x_6,\ldots, x_n)$ contains $I_T$. If $n=6$, then the prime ideal $P=(x_2,x_3,x_4,x_6)$ contains $I_T$. Therefore, in any case, $\height I_T\leq n-2$, and hence $S/I_T$ is not a complete intersection, a contradiction. 
 
 Conversely, first suppose that the root~1 is a leaf of $T$. Then, $T=L_n$, and hence $S/I_T$ is a complete intersection, by Corollary~\ref{complete intersection}.   
 
 Next, suppose that the root~1 is the neighbor of a leaf. Note that $N_T(1)=\{2,3\}$. Then, we distinguish two cases:

\medskip 
 Case~1. Assume that 2 is a leaf. If $n=3$, then it is clear that $\height I_T=2$, and the result is clear. Next, assume that $n\geq 4$. Let $P_1=(x_1,x_2,x_4,\ldots,x_n)$, $P_2=(x_2,x_3,\ldots,x_n)$ and $P_3=(x_1,x_2,g_{ij}: 3\leq i<j\leq n)$. It is clear that $I_T$ is contained in $P_i$ for all $i=1,2,3$. We claim that 
 \[
 \Min (I_T)=\{I_X,P_1,P_2,P_3\}.
 \]
 
 Let $Q\in \Min (I_T)$. If $Q$ contains no variables, then by Proposition~\ref{height}, $Q=I_X$. Suppose that $Q$ contains a variable. 
 
 First, assume that $x_2\notin Q$. Since $g_{12}\in I_T\subseteq Q$, it follows that $x_1, x_3\notin Q$. On the other hand, if $x_4\in Q$, then $x_2x_3\in Q$, because $g_{13}\in Q$, which is a contradiction. So, we assume that $i\geq 5$ is the smallest integer such that $x_i\in Q$. Now, it follows from $g_{i-2~i-1}=x_{i-2}x_i-x_{i-1}^2\in I_T\subseteq Q$ that $x_{i-1}\in Q$, a contradiction. 
 
Therefore, we assume that $x_2\in Q$. Then, $x_1x_3\in Q$, since $g_{12}=x_1x_3-x_2^2\in I_T\subseteq Q$. Thus, $x_1\in Q$ or $x_3\in Q$. Thus, we consider the following cases:
 
(i) Suppose that $x_1\in Q$. Let $T'$ be the induced subgraph of $T$ on the vertex set $[n]\setminus \{1,2\}$. Then, the Hankel edge ideal $I_{T'}$ is generated in $S'=\KK[x_3,\ldots,x_{n+1}]$. Thus, it follows from Theorem~\ref{minimal primes} (by a relabeling) that 
 \begin{equation}\label{min} 
 \Min_{S'} (I_{T'})=\{Q_1=(g_{ij}: 3\leq i<j\leq n),Q_2=(x_4,x_5,\ldots,x_n)\}.
 \end{equation}    
 
Since $Q\in \Min(I_T)$, we have $Q/(x_1,x_2)\in \Min (I_T+(x_1,x_2)/(x_1,x_2))$. This implies that 
\begin{equation}\label{Q'}
Q=Q'+(x_1,x_2),
\end{equation}
where $Q'$ is generated in $S'$. Since $I_{T'}+(x_1,x_2)\subseteq Q'+(x_1,x_2)$ and since $x_1$ and $x_2$ do not appear in the generators of $Q'$, it follows that $I_{T'}\subseteq Q'$ as ideals of $S'$, and moreover, we have $Q'\in \Min_{S'} (I_{T'})$. Therefore, by \eqref{min} we have $Q'=Q_1$ or $Q'=Q_2$, and hence \eqref{Q'} implies that $Q=P_1$ or $Q=P_3$.    
 
 (ii) Suppose that $x_3\in Q$ and $x_1\notin Q$. Since $g_{ii+1}\in I_T\subseteq Q$ for all $i=3,\ldots,n-1$, it follows that $x_i\in Q$ for all $i=3,\ldots,n$. Therefore, $Q=P_2$.
 
 Finally, since neither of $I_X$, $P_1$, $P_2$ and $P_3$ is contained in the others, the claim is proved. 

\medskip  
 Case~2. Assume that 2 is not a leaf and 3 is. Therefore, we have $n\geq 4$. Let $P_1=(x_1,x_2,x_4,\ldots,x_n)$, 
 $P_2=(x_2,x_3,\ldots,x_n)$, 
 $P_3=(x_1,x_2,x_3,g_{ij}:4\leq i<j\leq n)$ and $P_4=(x_1,x_2,x_3,x_5,\ldots,x_n)$. It is clear that $I_T$ is contained in $P_i$ for all $i=1,2,3,4$.
 We claim that if $n\neq 5$, then   
 \[
 \Min (I_T)=\{I_X,P_1,P_2,P_3,P_4\},
 \]
 and if $n=5$, then 
  \[
 \Min (I_T)=\{I_X,P_1,P_2,P_3\}.
 \]
Note that if $n=4$, then $P_3=P_4$. Let $Q\in \Min (I_T)$. 

First, suppose that $x_2\notin Q$. Since $g_{12}\in I_T\subseteq Q$, it follows that $x_1\notin Q$ and $x_3\notin Q$. We also have $x_4\notin Q$. Indeed, if $x_4\in Q$, then we have $x_2x_3\in Q$, since $g_{13}\in I_T\subseteq Q$, which is a contradiction. Now, let $i$ be the smallest integer such that $x_i\in Q$. Assume that $i=5$. Then, since $g_{24}\in Q$ and $x_3\notin Q$, it follows that $x_4\in Q$, a contradiction to the choice of $i$. Now, assume that $i\geq 6$. Then, since $g_{i-2~i-1}\in Q$, we deduce that $x_{i-1}\in Q$, a  contradiction to the choice of $i$. Therefore, if $x_2\notin Q$, then no other variable belongs to $Q$. Thus, by Proposition~\ref{height}, $Q=I_X$.     

Next, suppose that $x_2\in Q$. Since $g_{12}\in Q$, it follows that $x_1x_3\in Q$. Now, we consider the following cases: 

(i) Suppose that $x_1\in Q$ and $x_3\notin Q$. Since $x_2$ and $g_{24}$ belong to $Q$, we have $x_3x_4\in Q$, and hence $x_4\in Q$. Therefore, since $g_{ii+1}\in Q$ for any $i=4,\ldots,n-1$, we have $x_i\in Q$ for any $i=4,\ldots,n$. Hence, in this case, we deduce that $Q=P_1$. 

(ii) Suppose that $x_1\notin Q$ and $x_3\in Q$. Since $x_2$ and $g_{13}$ belong to $Q$, we have $x_1x_4\in Q$, and hence $x_4\in Q$. Therefore, similar to the previous case, we have $x_i\in Q$ for any $i=4,\ldots,n$. Hence, in this case, we deduce that $Q=P_2$.

(iii) Suppose that $x_1\in Q$ and $x_3\in Q$. Let $T'$ be the induced subgraph of $T$ on the vertex set $[n]\setminus \{1,2,3\}$. Then, the Hankel edge ideal $I_{T'}$ is generated in $S'=\KK[x_4,\ldots,x_{n+1}]$. Therefore, we have $Q=Q'+(x_1,x_2,x_3)$, where $Q'$ is a prime ideal generated in $S'$, and similar to Case~1, we have $Q'\in \Min_{S'}(I_{T'})$. If $n=4$, then $Q'=(0)$, and hence $Q=P_3=P_4$. If $n=5$, then $Q'=(g_{45})$, and hence $Q=P_3$. If $n\geq 6$, then by Theorem~\ref{minimal primes}, we have  
\[
Q'\in \{(x_5,\ldots,x_n), (g_{ij}:4\leq i<j\leq n)\},
\] 
and hence $Q=P_3$ or $Q=P_4$, which completes the proof. 
%Finally, since none of the ideals $I_X$, $P_1$, $P_2$, $P_3$ and $P_4$ is contained in the others, our claim is proved. 
\end{proof}
	
Combining Theorem~\ref{rooted tree} and Theorem~\ref{paths}, we get the following characterization of rooted labeled trees with complete intersection Hankel edge ideal.

\begin{Corollary}\label{corollary}
	Let $T$ be a tree with a rooted labeling. Then
	$S/I_T$ is a complete intersection if and only if $T$ is a path whose root is either a leaf or the neighbor of a leaf. 
\end{Corollary}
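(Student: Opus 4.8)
The plan is to prove the two implications separately, treating the statement as the formal combination of Theorem~\ref{rooted tree} and Theorem~\ref{paths}, which together cover the non-path case and the path case respectively.

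For the ``if'' direction, I would suppose that $T$ is a path whose root is either a leaf or the neighbor of a leaf. Since $T$ is then a path on $n$ vertices carrying a rooted labeling, Theorem~\ref{paths} applies verbatim and yields that $S/I_T$ is a complete intersection. This direction therefore requires no additional argument beyond citing that theorem.

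For the ``only if'' direction, I would suppose that $S/I_T$ is a complete intersection and proceed in two steps. First I would invoke Proposition~\ref{tree}: since a tree is connected, this gives $\height I_T = n-1$ (the fact that $T$ is a tree is part of the hypothesis, but the height equality is what is needed below). Second, I would show $T$ must be a path by contradiction: if $T$ were \emph{not} a path, then Theorem~\ref{rooted tree} would force $\height I_T \leq n-2$, contradicting $\height I_T = n-1$. Hence $T$ is a path, and applying the ``only if'' half of Theorem~\ref{paths} to this path shows that the root of $T$ is either a leaf or the neighbor of a leaf, as required.

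Since every substantive computation is already isolated inside the two cited theorems, there is no genuine obstacle in this argument; the only point demanding care is the logical bookkeeping of splicing together a statement about non-paths (Theorem~\ref{rooted tree}) with one about paths (Theorem~\ref{paths}). Concretely, I would be careful to use Theorem~\ref{rooted tree} in its contrapositive form---being a complete intersection pins the height at $n-1$, which excludes the non-path case---rather than attempting to reprove any height bound directly.
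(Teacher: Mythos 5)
Your proof is correct and matches the paper's approach exactly: the paper derives this corollary simply by combining Theorem~\ref{rooted tree} (which already records that a non-path rooted labeled tree cannot give a complete intersection) with Theorem~\ref{paths}. Your extra detour through $\height I_T = n-1$ via Proposition~\ref{tree} is harmless but unnecessary, since the ``in particular'' clause of Theorem~\ref{rooted tree} already excludes the non-path case directly.
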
	
	
\medskip
Let $I$ be a monomial ideal in $S$. Recall that $S/I$ is a complete intersection if the monomial generators in the unique minimal monomial  generating set of $I$ are relatively prime.   

Let $<$ be the \emph{reverse lexicographic} order on $S$ induced by 
$x_1>\cdots >x_n >x_{n+1}$. By \cite[Theorem~1]{CDE}, we know that, not only $S/I_{L_n}$, but also $S/\ini_<(I_{L_n})$ is a complete intersection, since $\ini_<(I_{L_n})=(x_{i+1}^2: 1\leq i\leq n-1)$.

Let $T_1$ and $T_2$ be the rooted labeled paths on $[n]$, different from $L_n$, considered in Corollary~\ref{corollary}. Indeed, let $T_1$ be the rooted labeled path on $n\geq 3$ vertices with the root~1 in which the vertex~2 is a leaf, and let $T_2$ be the rooted labeled path on $n\geq 4$ vertices with the root~1 in which the vertex~3 is a leaf.  Now, according to Corollary~\ref{corollary}, it is natural to ask the same question about $S/\ini_<(I_{T_1})$ and $S/\ini_<(I_{T_2})$. First, in the following proposition, we determine the initial ideals of $I_{T_1}$ and $I_{T_2}$. 

\begin{Proposition} \label{initial}
	Let $T_1$ and $T_2$ be as above. Then, 
	\[
	\ini_<(I_{T_1})=(x_2x_3,x_1x_3^2,x_2^2,x_{i+1}^2: 3\leq i\leq n-1),
	\]
	and 
	\[
	\ini_<(I_{T_2})=(x_2x_3,x_3x_4,x_1x_3^2,x_1x_4^2,x_2^2,x_{i+1}^2: 4\leq i\leq n-1).
	\]
\end{Proposition}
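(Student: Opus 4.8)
The plan is to compute a Gröbner basis for $I_{T_1}$ and $I_{T_2}$ with respect to the reverse lexicographic order and then read off the initial ideal. The edge generators are the Hankel binomials $g_{ij}=x_ix_{j+1}-x_{i+1}x_j$. For $T_1$ (root $1$, leaf $2$, so the path is $2,1,3,4,\ldots,n$), the edges are $\{1,2\}$, $\{1,3\}$, and $\{i,i+1\}$ for $3\le i\le n-1$. I would write down each $g_{ij}$ explicitly: $g_{12}=x_1x_3-x_2^2$, $g_{13}=x_1x_4-x_2x_3$, and $g_{i\,i+1}=x_ix_{i+2}-x_{i+1}^2$ for $i\ge 3$. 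With $x_1>\cdots>x_{n+1}$, the leading terms in reverse lex are $x_1x_3$, $x_2x_3$ (since in reverse lex with these variables $x_2x_3>x_1x_4$), and $x_{i+1}^2$ respectively. The candidate initial ideal in the statement, however, lists $x_2x_3$, $x_1x_3^2$, $x_2^2$, and the squares $x_{i+1}^2$, so the edge generators alone are not a Gröbner basis, and new elements arise from $S$-polynomial reductions.

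First I would form the relevant $S$-pairs. The key overlap is between $g_{12}$ (leading term $x_1x_3$) and $g_{13}$ (leading term $x_2x_3$), whose $S$-polynomial is $x_2\cdot g_{12}-x_1\cdot g_{13}=x_1^2x_4-x_2^3$ up to sign; I expect this to reduce, using $g_{13}$ again, to a multiple producing the generator $x_1x_3^2$ (equivalently, $x_3\cdot g_{12}=x_1x_3^2-x_2^2x_3$ reduces via $g_{13}$ to yield a relation with leading term $x_1x_3^2$). The square $x_2^2$ must also enter the initial ideal; this is the trailing term of $g_{12}$ and appears as a leading term only after combining $g_{12}$ with the $x_2x_3$-relation from $g_{13}$, so I would track how $x_2^2$ becomes a genuine leading term of an element of $I_{T_1}$. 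Once these new elements are adjoined, I would run Buchberger's criterion to confirm all remaining $S$-polynomials reduce to zero, so that $\{g_{12},g_{13},g_{i\,i+1}\}$ together with the two derived elements form a Gröbner basis; the leading terms are then exactly $x_2x_3,\,x_1x_3^2,\,x_2^2$, and $x_{i+1}^2$ for $3\le i\le n-1$, matching the claimed $\ini_<(I_{T_1})$.

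For $T_2$ (root $1$, leaf $3$, path $3,1,2,4,5,\ldots,n$ in the sense that $1$ has neighbors $2,3$ and $2$ has neighbor $4$), the edges give $g_{12}=x_1x_3-x_2^2$, $g_{13}=x_1x_4-x_2x_3$, $g_{24}=x_2x_5-x_3x_4$, and $g_{i\,i+1}=x_ix_{i+2}-x_{i+1}^2$ for $4\le i\le n-1$. The analysis parallels $T_1$ but now there is an additional overlap involving $g_{13}$ and $g_{24}$ producing the extra generator $x_1x_4^2$ and the extra leading term $x_3x_4$. I would again compute the $S$-polynomials among $g_{12},g_{13},g_{24}$ and the path relations, verify that $x_2x_3,\,x_3x_4,\,x_1x_3^2,\,x_1x_4^2,\,x_2^2$ arise as leading terms of reductions, and check via Buchberger that nothing further appears.

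The main obstacle will be the bookkeeping in the Buchberger check: confirming that \emph{every} $S$-polynomial, including those between the newly derived generators $x_1x_3^2$, $x_1x_4^2$ and the far-away square relations $x_{i+1}^2$, reduces to zero, and in particular that no additional leading terms sneak in. I would organize this by noting that the squares $x_{i+1}^2$ for $i\ge 3$ (resp. $i\ge 4$) involve variables disjoint from the low-index generators except through a single shared variable, so their $S$-pairs are either coprime (automatically reducing by the product criterion) or reduce through one of the already-listed generators; the only genuinely interacting pairs are the finitely many among $g_{12},g_{13},g_{24}$ and their consequences, which can be checked by hand. Alternatively, since the claimed $\ini_<$ is squarefree-times-monomial and easy to describe, I could instead verify the two inclusions directly: every listed monomial lies in $\ini_<(I_T)$ by exhibiting the reducing element, and conversely compare Hilbert functions or degrees to rule out any larger initial ideal, which sidesteps the full Buchberger computation.
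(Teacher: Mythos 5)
Your overall strategy (compute a Gr\"obner basis via Buchberger's criterion and read off the leading terms) is exactly the paper's, and your identification of the edge generators of $I_{T_1}$ and $I_{T_2}$ is correct. However, there is a concrete error at the very first step that derails the rest: you assert that $\ini_<(g_{12})=x_1x_3$ for $g_{12}=x_1x_3-x_2^2$. Under the reverse lexicographic order induced by $x_1>\cdots>x_{n+1}$ one has $x_2^2>x_1x_3$ (the rightmost nonzero entry of the exponent vector of $x_1x_3$ minus that of $x_2^2$ is positive), so in fact $\ini_<(g_{12})=x_2^2$. Note that $g_{12}$ has exactly the same shape $x_ix_{i+2}-x_{i+1}^2$ as the generators $g_{i\,i+1}$ for which you correctly take $x_{i+1}^2$ as the leading term, so your proposal is internally inconsistent on this point. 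The error matters: if $x_1x_3$ really were the leading term of an element of $I_{T_1}$, then $x_1x_3\in\ini_<(I_{T_1})$, and since $x_1x_3$ is divisible by none of $x_2x_3,\ x_1x_3^2,\ x_2^2,\ x_{i+1}^2$, the proposition you are trying to prove would be false. Your subsequent search for how ``$x_2^2$ becomes a genuine leading term'' is chasing a phantom ($x_2^2$ is simply $\ini_<(g_{12})$), and your $S$-pair $x_2g_{12}-x_1g_{13}=x_1^2x_4-x_2^3$ is formed from the wrong leading terms.

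With the correct leading terms $\ini_<(g_{12})=x_2^2$, $\ini_<(g_{13})=x_2x_3$, $\ini_<(g_{i\,i+1})=x_{i+1}^2$, the computation is the one in the paper: $S(g_{12},g_{13})$ equals $f=x_1x_2x_4-x_1x_3^2$ with $\ini_<(f)=x_1x_3^2$, one adjoins $f$, checks that $S(g_{13},f)=-(x_1x_4)g_{12}$ reduces to zero, and disposes of all other pairs by the coprimality criterion; for $T_2$ one additionally adjoins $h=x_1x_3x_5-x_1x_4^2$ with $\ini_<(h)=x_1x_4^2$ and checks the four extra $S$-pairs involving $g_{24}$, $f$ and $h$. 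Your fallback suggestion of comparing Hilbert functions to ``rule out any larger initial ideal'' is not a workable shortcut here, since computing the Hilbert function of $I_{T_i}$ without already having a Gr\"obner basis is no easier than the Buchberger check itself.
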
 	
	
\begin{proof}
 We prove the statement by applying Buchberger's criterion, see for example  \cite[Theroem~2.3.2]{HH}. Let $f=x_1x_2x_4-x_1x_3^2$. We first consider $T_1$.  We claim that the set of binomials 
 \[
 \mathcal{G}_1=\{f, g_{12}, g_{13}, g_{i i+1} : i=3,\ldots, n-1\} 
 \]
 is a Gr\"obner basis for $I_{T_1}$ with respect to $<$. 
 %By the Buchberger criterion, we need to compute the $S$-polynomials of them. 
 We have that $\ini_<(f)=x_1x_3^2$, $\ini_{<}(g_{12})=x_2^2$, $\ini_{<}(g_{13})=x_2x_3$ and $\ini_{<}(g_{i i+1})=x_{i+1}^2$ for $i=3,\ldots, n-1$. By \cite[Lemma~2.3.1]{HH}, we only need to compute $S(g_{12},g_{13})$ and $S(g_{13},f)$. It is easily seen that $S(g_{12},g_{13})=f$ and $S(g_{13},f)=-(x_1x_4) g_{12}$ which both reduce to zero with respect to $\mathcal{G}_1$. Therefore, the claim follows, and hence 
 $\ini_<(I_{T_1})=(x_2x_3,x_1x_3^2,x_2^2,x_{i+1}^2: 3\leq i\leq n-1)$, as desired.     
 
 Next, we consider $T_2$. Let $h=x_1x_3x_5-x_1x_4^2$. We claim that the set of binomials 
 \[
 \mathcal{G}_2=\{f, h, g_{12}, g_{13}, g_{24}, g_{i i+1} : i=4,\ldots, n-1\} 
 \]
 is a Gr\"obner basis for $I_{T_2}$ with respect to $<$. Note that  $\ini_<(h)=x_1x_4^2$, $\ini_<(g_{24})=x_3x_4$. Using \cite[Lemma~2.3.1]{HH} and according to our computations for $T_1$, we just need to compute the following four $S$-polynomials:
 \[
 S(f,h)=(x_3x_5)f+(x_2x_4)h, \quad \quad S(g_{13},g_{24})=-x_5 g_{12}+h,
 \]      
 
 \[
 S(g_{24},f)=-x_2 h, \quad \quad S(g_{24},h)=-x_5 f. 
 \]   	
 All the above $S$-polynomials clearly reduce to zero with respect to $\mathcal{G}_2$. Thus, the claim follows, and hence $\ini_<(I_{T_2})=(x_2x_3,x_3x_4,x_1x_3^2,x_1x_4^2,x_2^2,x_{i+1}^2: 4\leq i\leq n-1)$ which completes the proof. 
\end{proof}

The following corollary is now immediate. 
	
\begin{Corollary}\label{regular sequence}
	Let $T_1$ and $T_2$ be as above. Then $S/\ini_<(I_{T_1})$ and $S/\ini_<(I_{T_2})$ are not complete intersection.  
\end{Corollary}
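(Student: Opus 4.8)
The plan is to read off the answer directly from the explicit computation in Proposition~\ref{initial} via the characterization of monomial complete intersections recalled immediately before it: for a monomial ideal $I$, the quotient $S/I$ is a complete intersection precisely when the elements of the unique minimal monomial generating set of $I$ are pairwise relatively prime. Thus the corollary reduces to two elementary checks for each of $\ini_<(I_{T_1})$ and $\ini_<(I_{T_2})$: first, that the monomials displayed in Proposition~\ref{initial} really do form the minimal generating set (i.e.\ no listed monomial divides another, which one verifies at a glance from the lists), and second, that these generators are \emph{not} pairwise coprime.

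For the second point I would simply exhibit a pair of generators sharing a variable. In $\ini_<(I_{T_1})=(x_2x_3,x_1x_3^2,x_2^2,x_{i+1}^2: 3\leq i\leq n-1)$ the generators $x_2x_3$ and $x_2^2$ both involve $x_2$ (and likewise $x_2x_3$ and $x_1x_3^2$ share $x_3$), so the minimal generating set is not pairwise relatively prime. Exactly the same pair $x_2x_3,\,x_2^2$ occurs in $\ini_<(I_{T_2})=(x_2x_3,x_3x_4,x_1x_3^2,x_1x_4^2,x_2^2,x_{i+1}^2: 4\leq i\leq n-1)$, and again fails coprimality. Since the monomial complete intersection criterion is in fact an equivalence, this forces $S/\ini_<(I_{T_1})$ and $S/\ini_<(I_{T_2})$ to fail to be complete intersections.

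The only delicate point is logical rather than computational: the statement quoted before Proposition~\ref{initial} records the criterion as a sufficient condition, whereas here I need its necessity (the standard equivalence for monomial ideals). To keep the argument self-contained using only tools already in the paper, I would instead compare generators with height. Passing to an initial ideal preserves height, and by Corollary~\ref{corollary} the paths $T_1,T_2$ have complete intersection Hankel edge ideal, so $\height \ini_<(I_{T_j})=\height I_{T_j}=n-1$. On the other hand, counting the generators listed in Proposition~\ref{initial} gives $\mu(\ini_<(I_{T_1}))=n$ and $\mu(\ini_<(I_{T_2}))=n+1$, both strictly larger than $n-1$. Hence $\mu>\height$ in each case, and neither quotient is a complete intersection. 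There is essentially no obstacle here; the corollary is immediate once the minimal generating sets of Proposition~\ref{initial} are in hand, the only thing worth flagging being the choice between invoking the monomial equivalence and the height-versus-$\mu$ comparison.
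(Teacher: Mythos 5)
Your proposal is correct and its primary argument---that the minimal monomial generators listed in Proposition~\ref{initial} are not pairwise coprime (e.g.\ $x_2x_3$ and $x_2^2$ share $x_2$)---is exactly what the paper means when it declares the corollary ``immediate'' after recalling the complete intersection criterion for monomial ideals. Your concern about the paper stating only the sufficient direction of that criterion is reasonable, and your fallback comparison of $\mu(\ini_<(I_{T_j}))$ (which is $n$ and $n+1$, respectively) with $\height \ini_<(I_{T_j})=\height I_{T_j}=n-1$ is a valid, self-contained way to close that gap.
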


We would like to close this section by posing some natural questions. 	In this section, we studied the trees $T$ with rooted labeling for which $S/J_T$ is a complete intersection. It is natural to ask if there is another vertex labeling for a tree $T$ for which $S/J_T$ is a complete intersection. On the other hand, a characterization of trees with rooted labeling whose Hankel edge ideal is almost complete intersection or set-theoretic complete intersection  would be of interest.

	\section*{Acknowledgment}
	
	%The first and the second authors would like to thank the Institute for Research in Fundamental Sciences (IPM) for financial support. 
	%The first author was in part supported by a grant from IPM (No. 99050211). 	
	The second author was in part supported by a grant from IPM (No. 1401130112). The third author was supported by CNPq-Brasil (Bolsa 310194/2019-9).

\end{document}